\documentclass[a4paper,12pt,openright]{article}
\usepackage{amsfonts,graphicx}
\usepackage{amssymb,amsfonts}
\usepackage{amsmath}
\usepackage{amsthm}
\usepackage[english]{babel}
\numberwithin{equation}{section}
\usepackage{color}
\setcounter{MaxMatrixCols}{10}
\usepackage{flafter}
\usepackage{pstricks}
\usepackage{graphicx}
\usepackage[rightcaption]{sidecap}
\usepackage{subfig}
\usepackage[all,arc]{xy}
\usepackage{enumerate}
\usepackage{mathrsfs}
\usepackage[pdftex]{hyperref}
\usepackage{indentfirst}
\usepackage{multicol}
\usepackage{float}
\usepackage{ucs}
\usepackage[utf8x]{inputenc}
\usepackage{longtable}
\usepackage{hhline}
\usepackage{pict2e}

\usepackage{makeidx}
\makeindex

\usepackage{geometry}
\geometry{lmargin=3.0cm,rmargin=2.0cm,tmargin=2.5cm,bmargin=2.0cm}
\usepackage{fancyhdr}

\usepackage{pdfpages}

\theoremstyle{definition}
\newtheorem{theorem}{Theorem}[section]
\newtheorem*{thno}{Theorem}

\newtheorem{lemma}[theorem]{Lemma}
\newtheorem{remark}[theorem]{Remark}
\newtheorem{definition}[theorem]{Definition}

\newtheorem{example}[theorem]{Example}

\parindent=0pt

\begin{document}

\title{Special Weingarten surfaces  with  planar convex boundary}
\author{B. Nelli, G. Pipoli, M.P. Tassi}

\date{}

\maketitle

{\small \noindent {\bf Abstract:} We prove a Ros-Rosenberg theorem in the setting of Special Weingarten surfaces. We show that a compact, connected, embedded,  Special  Weingarten surface in ${\mathbb R}^3$ with  planar convex  boundary is a  topological disk under mild suitable assumptions.
\medskip

 \let\thefootnote\relax\footnote{The authors were partially supported by INdAM-GNSAGA, PRIN-2022AP8HZ9, PRIN-20225J97H5.}

\noindent {\bf MSC 2020 subject classification:}  {53A10, 53C42}

\noindent{{\bf Keywords:}  Weingarten surfaces, ellipticity, lima\c con, topological type.}

\bigskip

\section*{Introduction}

Classically, an oriented surface $\Sigma$ immersed in ${\mathbb R}^3$ is called a {\em Special Weingarten surface}  if its principal curvatures $\kappa_1, \kappa_2$ satisfy a relation 
\begin{equation}\label{defWeingarten1}
W(\kappa_1,\kappa_2)=0,
\end{equation}
where $W: \mathbb{R}^2 \rightarrow \mathbb{R}$ is a function of class $C^1$ and  

\begin{equation}\label{conditionellipticity}
\frac{\partial W}{\partial \kappa_1} \frac{\partial W}{\partial \kappa_2} > 0,
\end{equation}
on the subset of $\mathbb{R}^2$ given by $W^{-1}(\{0\})$. 

Condition \eqref{conditionellipticity} guarantees that the partial differential equation locally satisfied by a Special Weingarten surface, when it is parametrized as a graph over its tangent plane, is elliptic. Notice that, in general, such equation is fully non linear. For short, we will denote  a Special Weingarten surface by SW-surface.

Simple examples of SW-surfaces are constant mean curvature surfaces, that is $W(\kappa_1,\kappa_2) = \kappa_1 + \kappa_2 - 2H$ where the constant $H$ is the mean curvature and surfaces of positive constant Gaussian curvature, that is  $W(\kappa_1,\kappa_2) = \kappa_1 \kappa_2 - K$ where the constant $K$ is the Gaussian curvature.

In the 1950's, A.D. Alexandrov, H. Hopf, S. Chern, P. Hartman \& A. Wintner \cite{Al, Ho, Ch1, Ch2, HaWi} investigated closed Special Weingarten surfaces, extending results holding for constant mean curvature surfaces. At the end of the last century, H. Rosenberg \& R. Sa Earp \cite{RoSa} and F. Brito \& R. Sa Earp \cite{BrSa} studied both the compact with boundary case and the non-compact one, while Sa Earp \& Toubiana \cite{SaTo1} produced examples of SW-surfaces of revolution and R. Bryant \cite{Br} studied closed SW-surfaces using complex analysis. More recently many new results and examples about Special Weingarten surfaces  were produced by different authors \cite{AlEsGa, BuOr, CaCa, CoFeTe, EsMe, FeGaMi, GaMaMi, GaMi1, GaMi2, GaMiTa, KuSt, SaTo1, SaTo2}.

We point out that, in most of the cited works, $W$ is expressed in terms of the mean curvature $H$ and the Gaussian curvature $K$, resulting in the symmetry of $W$ with respect to the principal curvatures. We do not make this assumption and the absence of symmetry leads to the non-differentiability of the underlying partial differential equation at the umbilical points of the SW-surface. Then, as we need a tangency principle, we will rely on  a result by  Alexandrov \cite{Al},  where the symmetry of the elliptic operator is not assumed. We will discuss this issue in  Section \ref{section-maximum-principle}. 

Moreover, many results in the Literature are for Special Linear Weingarten surfaces, that is, surfaces whose mean curvature $H$ and Gaussian curvature $K$ satisfy a relation of the form $2aH + bK = c$, for some $a,b,c \in \mathbb{R}$ with $a^2 + bc > 0$.  It is interesting to notice that  Special Linear Weingarten surfaces are critical points of a natural functional that is a linear combination with constant coefficient depending on $a,$ $b,$ $c,$  of the area, the  integrals of the support function and  of the mean curvature of the surface \cite{AlEsGa, GaMaMi, RoSa}.

For SW-surfaces, the  topological and geometrical structure of compact SW-surfaces whose boundary is a given Jordan curve in ${\mathbb R}^3$ is far from being understood. We shall consider the following question: which are the shape and  the topology of embedded SW-surfaces with convex boundary in a plane $P$, contained in one of the two halfspaces determined by $P$?

We are  going to prove that, under suitable conditions, such  surface is topologically a closed disk. Moreover it is the union of vertical and possibly cylindrical graphs. This is a generalization of the following analogous result for constant mean curvature surfaces in ${\mathbb R}^3$ proved by A. Ros \& H. Rosenberg.

\begin{thno} \cite[Theorem $2$]{RoRo}
Let $\Gamma \subset P$ be a strictly convex curve. There is an $H(\Gamma) > 0$, depending only on the extreme values of the curvature of $\Gamma$, such that whenever $M \subset \mathbb{R}^3_{+}$ is asurface with constant mean curvature $H$ bounded by $\Gamma$, with $0 < H < H(\Gamma)$, then $M$ is topologically a disk and either $M$ is a graph over the domain $\Omega$ bounded by $\Gamma$ or $N = M \cap (\Omega \times ]0, \infty[)$ is a graph over $\Omega$ and $M - N$ is a graph over a subannulus of $\Gamma \times ]0,\infty[$, with respect to the lines normal to $\Gamma \times ]0,\infty[$.
\end{thno}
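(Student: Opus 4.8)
The plan is to normalize coordinates so that $P=\{z=0\}$ and $\mathbb{R}^3_+=\{z\ge 0\}$, to write $\Omega$ for the convex domain bounded by $\Gamma$, and to let $0<a\le \kappa_\Gamma\le b$ be the extreme curvatures of $\Gamma$. Orient $M$ so that its mean curvature vector points toward the concave side and $H>0$. The whole argument rests on two families of \emph{large} comparison surfaces of constant mean curvature $H$: the round spheres $S_R$ of radius $R=1/H$ and the round vertical cylinders $Z_r$ of radius $r=1/(2H)$. The role of the hypothesis $0<H<H(\Gamma)$ is exactly that, once $H(\Gamma)$ is chosen small enough in terms of $a$ and $b$ (so that $1/(2H)$ exceeds the osculating radii $1/a$ of $\Gamma$ and $1/H$ dominates the extrinsic size of $\Gamma$), these barriers are geometrically much bigger than $\Gamma$, and the classical maximum principle (the tangency principle for the constant mean curvature equation) forces $M$ to inherit strong constraints from the convexity of $\Gamma$. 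The two engines of the proof are then Alexandrov reflection in horizontal planes and comparison with the cylinders $Z_r$.

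First I would run Alexandrov reflection with the horizontal planes $P_t=\{z=t\}$, to show that the inner piece $N=M\cap(\Omega\times\,]0,\infty[)$ is a vertical graph over $\Omega$. Starting from $t>\max_M z$ and decreasing $t$, one reflects the cap $M\cap\{z>t\}$ across $P_t$ and compares it with the lower part of $M$. Because the reflected cap sits over $\Omega$, any interior contact produced by the tangency principle would force $M$ to be symmetric across some plane $P_{t_0}$ with $t_0>0$, hence to have a second boundary component at height $2t_0$, contradicting $\partial M=\Gamma\subset P$. Thus no interior contact can arrest the process over $\Omega$: the reflection completes, the height function of $M$ over $\Omega$ is monotone, and $N$ is a single vertical graph over $\Omega$.

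The crux is to control the complementary piece $M-N$ lying outside the solid cylinder $\Omega\times\mathbb{R}$, and to certify that it is a graph over a subannulus of $\Gamma\times\,]0,\infty[$ with respect to the horizontal lines normal to that cylinder. Here the convexity of $\Gamma$ and the smallness of $H$ enter decisively. Using the vertical cylinders $Z_r$ of radius $1/(2H)>1/a$, which have mean curvature exactly $H$, one slides such a cylinder along each outward horizontal normal to $\Gamma\times\,]0,\infty[$; since its radius exceeds every osculating radius of $\Gamma$ and its mean curvature matches that of $M$, the tangency principle forbids a second interior contact, so each such normal line meets $M-N$ at most once, which is the asserted cylindrical graph property. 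The same comparison, together with the height estimate coming from the sphere barriers $S_{1/H}$ (a hemisphere of radius $1/H$ placed above $M$ bounds $\max_M z$), confines the skirt to a bounded subannulus and prevents it from folding back, spiraling, or reaching the axis of the cylinder. This is precisely the step where the explicit dependence of $H(\Gamma)$ on $a$ and $b$ is genuinely needed, and it is the main technical obstacle of the proof; it also yields the dichotomy, according to whether $M-N$ is empty ($M$ a graph over $\Omega$) or not.

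Finally I would assemble the topology. In the first alternative $M=N$ is a vertical graph over the convex, hence simply connected, domain $\Omega$, so $M$ is a closed topological disk. In the second alternative $N$ is a vertical graph over $\Omega$, hence a disk, while $M-N$ is a graph over a subannulus of $\Gamma\times\,]0,\infty[$, hence a topological annulus; the monotonicity furnished by the horizontal reflection guarantees that $N$ and $M-N$ meet exactly along the single circle $M\cap(\Gamma\times\,]0,\infty[)$, so $M$ is a disk and an annulus glued along one boundary circle, again a closed topological disk. In either case $M$ is the union of a vertical graph and a possibly empty cylindrical graph, as claimed.
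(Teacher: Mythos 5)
There is a genuine gap, and it sits exactly at the two places where you deferred to assertion. First, your opening step overclaims what horizontal Alexandrov reflection gives. Reflecting the cap $M\cap\{z>t\}$ across $P_t$, the symmetry/second-boundary contradiction is only available while the reflected cap stays strictly above $P$, i.e.\ for $t\geq h_\Sigma/2$; below the midheight plane the reflected image can reach $\{z=0\}$ and the process legitimately stops. Moreover the cap need not ``sit over $\Omega$'' at all --- a priori $M$ can bulge far outside the solid cylinder $\Omega\times[0,\infty)$, which is the whole point of the theorem. So this argument proves only that $M\cap\{z\geq h_\Sigma/2\}$ is a vertical graph (this is step (4) in the paper's proof of Theorem \ref{main-theorem}); it does \emph{not} prove that $N=M\cap(\Omega\times\,]0,\infty[)$ is a graph, which is the hard conclusion, not an easy first step. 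Second, for the skirt you ``slide a CMC cylinder of radius $1/(2H)$ along the normals,'' but touching $M$ with a cylinder of equal mean curvature does not forbid a normal line from meeting $M-N$ twice: the tangency principle needs a one-sided contact with compatible orientations, and nothing in your setup produces that configuration along every normal line. In the paper the radial-graph property of the skirt comes from Alexandrov reflection in \emph{vertical} planes, and that reflection can only be carried far enough once one knows that $\Sigma$ avoids the region $\overline{\Omega}\times[0,h_\Sigma/2]$ inside the cylinder. You have no mechanism that excludes $M$ from the low region $\Omega\times[0,h^\ast]$, and your spheres and vertical cylinders cannot supply it: a barrier that must pass through the ``hole'' bounded by $\Gamma$ and sweep the region above it has to be a catenoid-type annular surface, not a sphere or a cylinder.

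Concretely, the machinery you are missing is the content of the paper's Case 2: Lemma \ref{Lemasmalldisk} (vertical-plane reflections plus the lima\c con of Pascal, yielding a definite disk $D(p,r)$ with $r\geq\sqrt{\lambda/\Lambda^3}$ over which $\Sigma$ is a graph, with $r$ depending only on $\Gamma$ and not on the unknown first-touching point); the sphere comparison giving two far-apart points in $\Omega(h_\Sigma-h^\ast)$ (Claim 1); a second lima\c con construction producing a large disk $D(c^\ast,R^\ast)$ whose vertical prism lies in the enclosed region $\mathcal{O}$ (Claims 2--3); and, decisively, a truncated minimal catenoid of neck radius $r(\Gamma)$ translated downward and then horizontally along $\Gamma$ to sweep $\Omega\times[0,h^\ast]$ free of $\Sigma$ (Claim 4) --- for the CMC case this is legitimate because the curvature diagram of any surface with $H>0$ lies above the line $\kappa_2=-\kappa_1$, which is inequality \eqref{assumption1} with $m_0=-1$. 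The smallness condition $0<H<H(\Gamma)$ enters through explicit inequalities ((\ref{conditioncase1}), (\ref{condition1b}), (\ref{condition2b}), (\ref{condition4}), i.e.\ the choice of the homothety ratio $d=1/H$) that make the catenoid fit inside the prism and make the lima\c con have two loops; none of this is recoverable from ``the barriers are much bigger than $\Gamma$.'' You also omit the easy dichotomy the paper starts from ($h_\Sigma<2d/\beta$ versus $h_\Sigma\geq 2d/\beta$): in the small-height case sphere and \emph{horizontal} cylinder barriers first confine $\Sigma$ inside $\overline{\Omega}\times[0,\infty)$, and only then does horizontal reflection show $\Sigma$ is a graph. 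Your final assembly of the topology from the two graph pieces is fine, but it rests entirely on the two unproved graph statements, so as it stands the proposal records the correct statement of the intermediate goals without proving either of them.
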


 Notice that our proof follows the strategy of \cite{Se, NePi, NePiRu}, which do not make use of techniques as curvature estimates and compactness theorems, which are not known in the context of SW-surfaces.

In order to establish Ros-Rosenberg theorem for constant mean curvature surfaces, the authors need to assume  that  the mean curvature remains sufficiently small in comparison to the curvature of the boundary. Due to the broadness of the set of  Special Weingarten surfaces, we need a strategy for identifying the suitable conditions for generalizing Ros-Rosenberg result  to SW-surfaces. Let us outline our plan.

Without loss of generality we can assume that $\kappa_1 \geq \kappa_2$. Then, condition \eqref{conditionellipticity} implies that each connected component of $W^{-1}(\{0\})$ can be parametrized as a graph of the form
\begin{equation} \label{defWeingarten2}
\kappa_2 = g(\kappa_1),
\end{equation}
for some proper function $g:I_g \rightarrow \mathbb{R}$ of class $C^1$ with $g' < 0$ everywhere, where $I_g = [\alpha, b)$, for some $b \in \mathbb{R} \cup \{+\infty\}$ and $g(\alpha) = \alpha$. 

Notice that the fact of expressing Weingarten surfaces using the function $g$ yields implicitly that the corresponding function  $W$ is, in general, not symmetric with respect to its variables. This permits to consider, for example, the very natural cases $\kappa_2 = c \kappa_1$ for any constant $c < 0$, that produce the non-symmetric  function
$W(\kappa_1,\kappa_2) = \kappa_2 - c \kappa_1$.

We denote by $\mathcal{W}_g$ the set of all oriented surfaces immersed in $\mathbb{R}^3$ associated to a given function $g$ satisfying the conditions above.  Then the possible sets $\mathcal{W}_g$ can be divided into three main groups, according to the geometric behaviors of their elements. Surfaces belonging to each group are called of  minimal-type, CMC-type (behaving as constant mean curvature surfaces) and CGC-type (behaving as constant Gaussian curvature surfaces), and they will be precisely described in terms of $g$ in Section \ref{section-curvaturediagram}.

The first step towards a generalization of Ros-Rosenberg Theorem for SW-surfaces is to notice that for any set $\mathcal{W}_g$ of either minimal-type or CGC-type, without any further restriction on the function $g$, the proof is a  straightforward  consequence of the Convex Hull Property and the Gauss-Bonnet Theorem, respectively (see Remark \ref{minimal-CGC-type}). On the other hand, when $\mathcal{W}_g$ consists of SW-surfaces of CMC-type, the situation is not elementary, and we present two settings where applying our approach.

First, we consider those sets $\mathcal{W}_g$ of CMC-type whose corresponding surfaces are related by some homothety of $\mathbb{R}^3$ and we define an equivalence relation among the functions $g$. If a function $g$ satisfies an additional but natural  global analytic assumption, we get Ros-Rosenberg result for all surfaces obtained from surfaces in $\mathcal{W}_g$ after a suitable homothety (see Theorem \ref{main-theorem}). This first case includes, in particular, any set $\mathcal{W}_g$ of uniformly elliptic Weingarten surfaces, that is, when there exist constants $c_0 < C_0 < 0$ such that $c_0 \leq g'(t) \leq C_0$ for all $t \in I_g = [\alpha,+\infty)$.

In the latter setting, we do not place any global assumption on the behavior of the function $g$, but we impose instead a mild geometric condition involving the curvatures of the sphere and the cylinder belonging to $\mathcal{W}_g$ (see Theorem \ref{theorem2}). As a consequence, we  enlarge the possible sets $\mathcal{W}_g$ of CMC-type for which Ros-Rosenberg result holds. For example, we can consider a function $g$ whose  whose  graph  is asymptotic to a vertical line, that  clearly does not satisfy the assumptions of Theorem \ref{main-theorem}.
It is also important to notice  that Theorem \ref{main-theorem} and Theorem \ref{theorem2} are not consequence one of the other, but they are complementary, although their proofs shares most of the  arguments.

The paper is organized as follows. Section \ref{section1} is devoted to preliminaries. In particular, we give the precise definition of SW-surface and we introduce the curvature diagram of a surface. Moreover,  we define an equivalence relation for sets $\mathcal{W}_g$ of CMC-type related by a  homothety of ${\mathbb R}^3$ and we recall  the tangency principle for SW-surfaces. In Section \ref{section-catenoid},  we present a class of Special Weingarten catenoids, governed by a   linear relation of the form $\kappa_2 = m_0 \kappa_1$, that we intend to use  as barriers in the proofs of theorems \ref{main-theorem} and \ref{theorem2}. In Section \ref{limacon-section}, we describe the geometric properties of  a classic curve called lima\c con of Pascal, that will be useful in our proofs. In Section \ref{section-Lemma}, we prove an important auxiliary lemma. Finally, in Section \ref{section-maintheorems}, we present our  two generalizations of Ros-Rosenberg Theorem for SW-surfaces (Theorem \ref{main-theorem} and Theorem \ref{theorem2})  as well  as their proofs.

\section{Preliminaries on Special Weingarten Surfaces}
\label{section1}
\subsection{The curvature diagram and the definition of  SW-surfaces} \label{section-curvaturediagram}
A convenient way to visualize different sets of SW-surfaces is to consider their curvature diagrams. We recall that for any surface $\Sigma$ immersed in $\mathbb{R}^3$ with principal curvatures $\kappa_1 \geq \kappa_2$, the \emph{curvature diagram} of $\Sigma$ is the set given by
\begin{equation}
\mathcal{D}(\Sigma) = \{(\kappa_1(p),\kappa_2(p)); p \in \Sigma\} \subset \mathbb{R}^2,
\end{equation}
which is contained in the closed semi-plane $x \geq y$ of $\mathbb{R}^2$. The points where $\mathcal{D}(\Sigma)$ intersects the diagonal $x = y$ correspond to the umbilical points of the surface. In general, the curvature diagram of an oriented surface immersed in $\mathbb{R}^3$ may have interior points,  but in the case where $\Sigma$ is a SW-surface, $\mathcal{D}(\Sigma)$ is contained in the level set $W^{-1}(\{0\})$, and then it is the regular curve whose tangent lines have always negative slope (see Fig. \ref{figure1}, left). For example, the curvature diagram of a constant mean curvature surface is contained in a half-line orthogonal to the line $\{x=y\}$, while the curvature diagram of a surface with positive constant gaussian curvature is contained in a branch of a hyperbola (see Fig. \ref{figure1}, right). Other relevant discussion about the curvature diagram can be found in Part II - Chapter V of \cite{Ho}.

\begin{figure}[h]
\centering
\includegraphics[scale=.1]{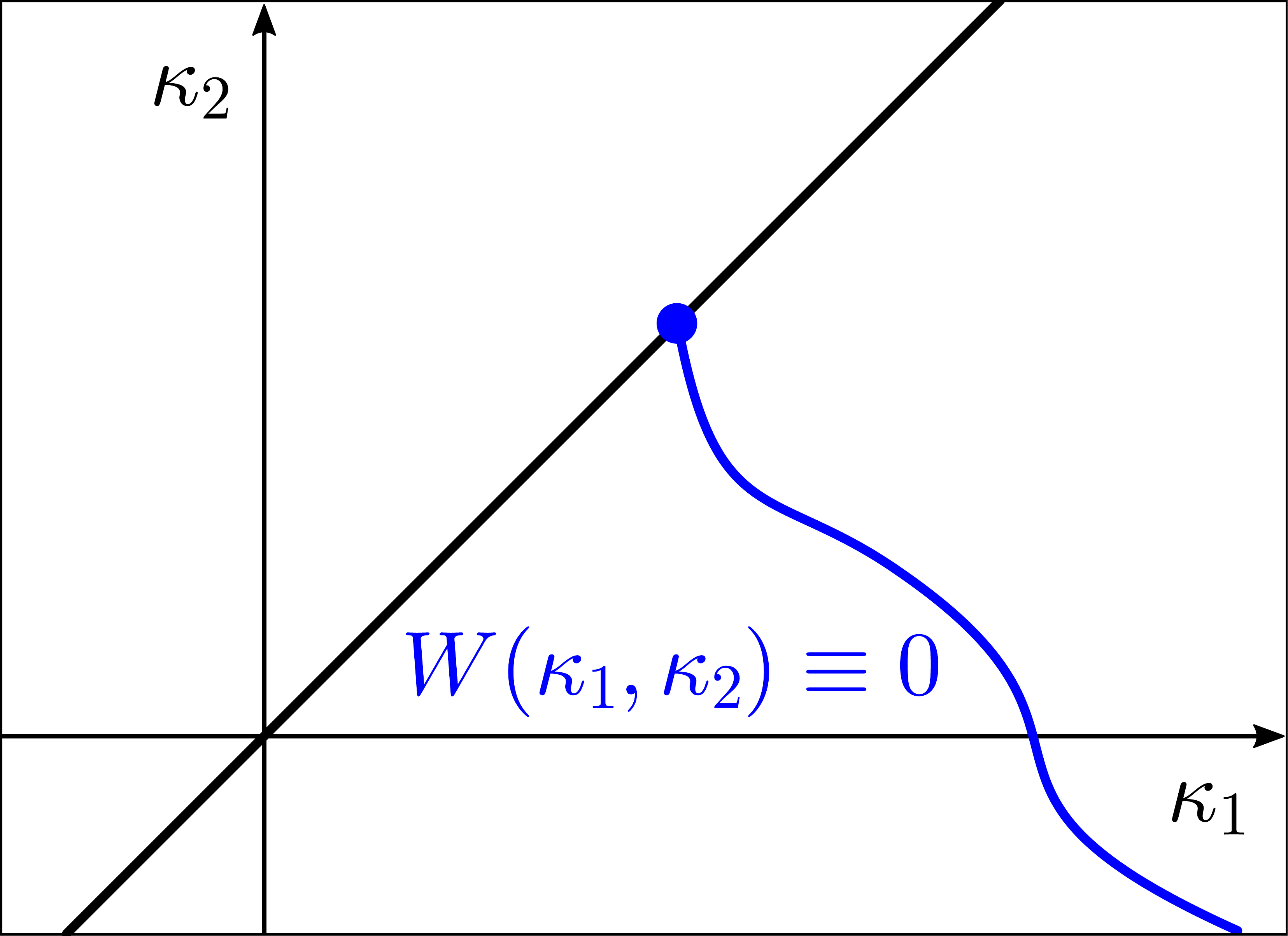}
\hspace{.2cm}
\includegraphics[scale=.1]{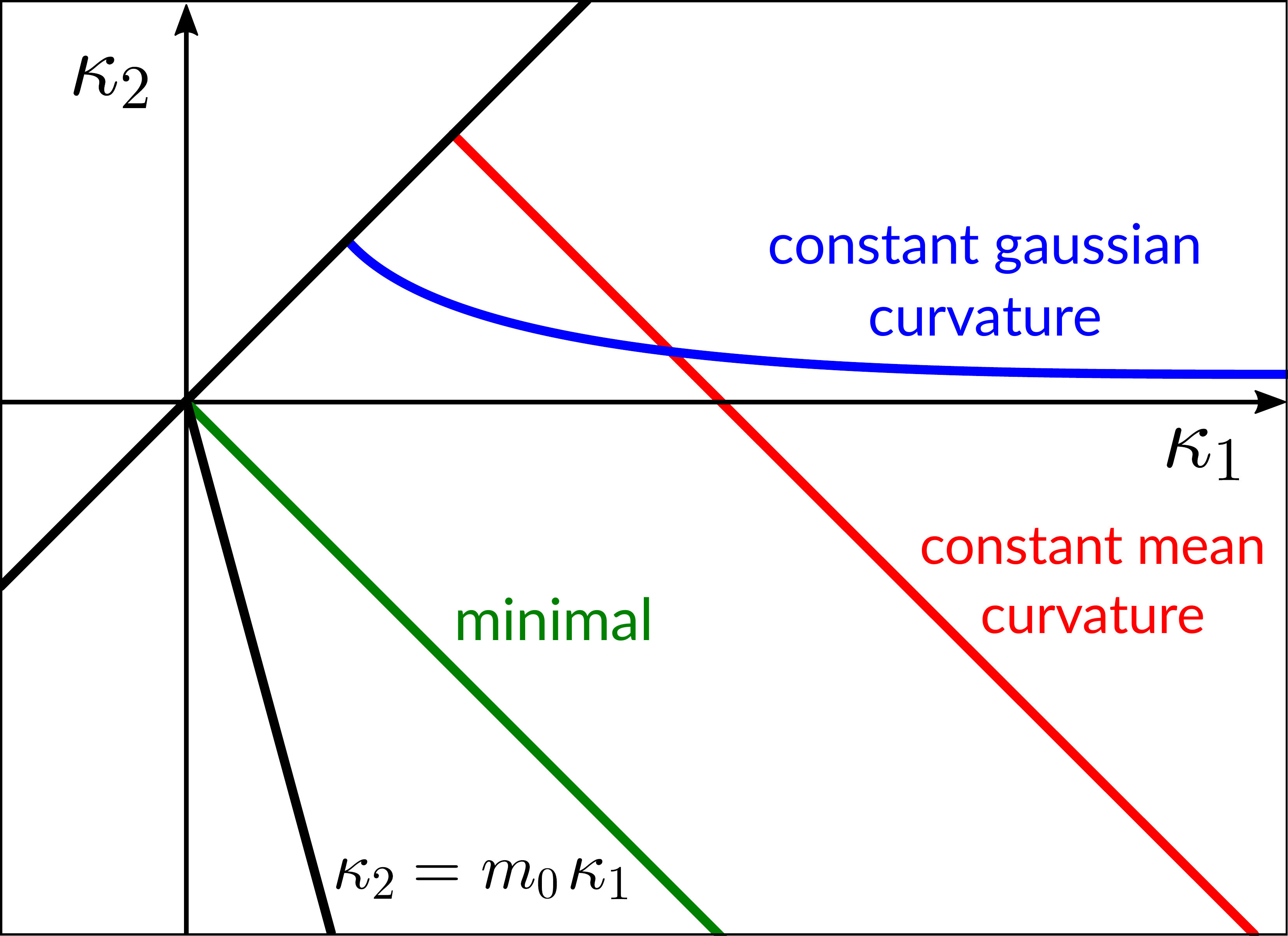}
\caption{on the left, the curvature diagram of a set of SW-surfaces; on the right, curvature diagrams of classical sets of SW-surfaces.}
\label{figure1}
\end{figure}

There are several ways of writing relation \eqref{defWeingarten1} satisfying condition \eqref{conditionellipticity}. In this paper we consider a particular one that is useful for our purposes.

Notice that for a set of SW-surfaces given in terms of Definition \ref{defWeingarten1} satisfying condition \eqref{conditionellipticity}, it is possible that the set $W^{-1}(\{0\})$ may contain more than one connected component. In general, surfaces belonging to different connected component are topologically and geometrically different and they are not comparable by the tangency principle, hence they are not suitable to be considered together. As an example, consider the simple case where $W(\kappa_1,\kappa_2) = \kappa_1 + \kappa_2 - \kappa_1\kappa_2$. Then $W^{-1}(\{0\})$ has two connected components which are contained in different branches of a hyperbola (see Fig. \ref{figure2}, left). When $\kappa_1 > 1$, the branch of this hyperbola includes round spheres of radius $1/2$, while for $\kappa_1 < 1$ the branch of this hyperbola includes planes. 

From now on we always deal with one connected component of $W^{-1}(\{0\})$. Then condition \eqref{conditionellipticity} implies that any such connected component can be parametrized as a graph of the form $\kappa_2 = g(\kappa_1)$ for some $C^1$ function with negative derivative everywhere. This leads to the following redefinition.

\begin{definition}\label{defWeingarten3}
A \emph{Special Weingarten surface} (\emph{SW-surface}, for short) is an oriented surface $\Sigma$ immersed in $\mathbb{R}^3$ whose principal curvatures $\kappa_1 \geq \kappa_2$ satisfy at every point of $\Sigma$ a relation of the type $\kappa_2 = g(\kappa_1)$, for some $g \in C^1(I_g)$ with $g'(t) < 0$ everywhere, where $I_g = [\alpha, b)$ for some $\alpha\in\mathbb R$ and $b \in \mathbb{R} \cup \{+\infty\}$ and $g(\alpha) = \alpha$.
\end{definition}

The number $\alpha$ is called  {\em  umbilicity constant} and it has the following  geometric meaning: when $\alpha = 0,$ planes are solution of \eqref{defWeingarten2}, while for $\alpha \neq 0$, round spheres of radius $1/|\alpha|$ are solutions to \eqref{defWeingarten2}. Up to a  change of orientation, we can always suppose that $\alpha \geq 0$.

We denote by $\mathcal{W}_g$ the set of all oriented SW-surfaces immersed in $\mathbb{R}^3$ associated to a given function $g$.

\begin{figure}[h]
\centering
\includegraphics[scale=.1]{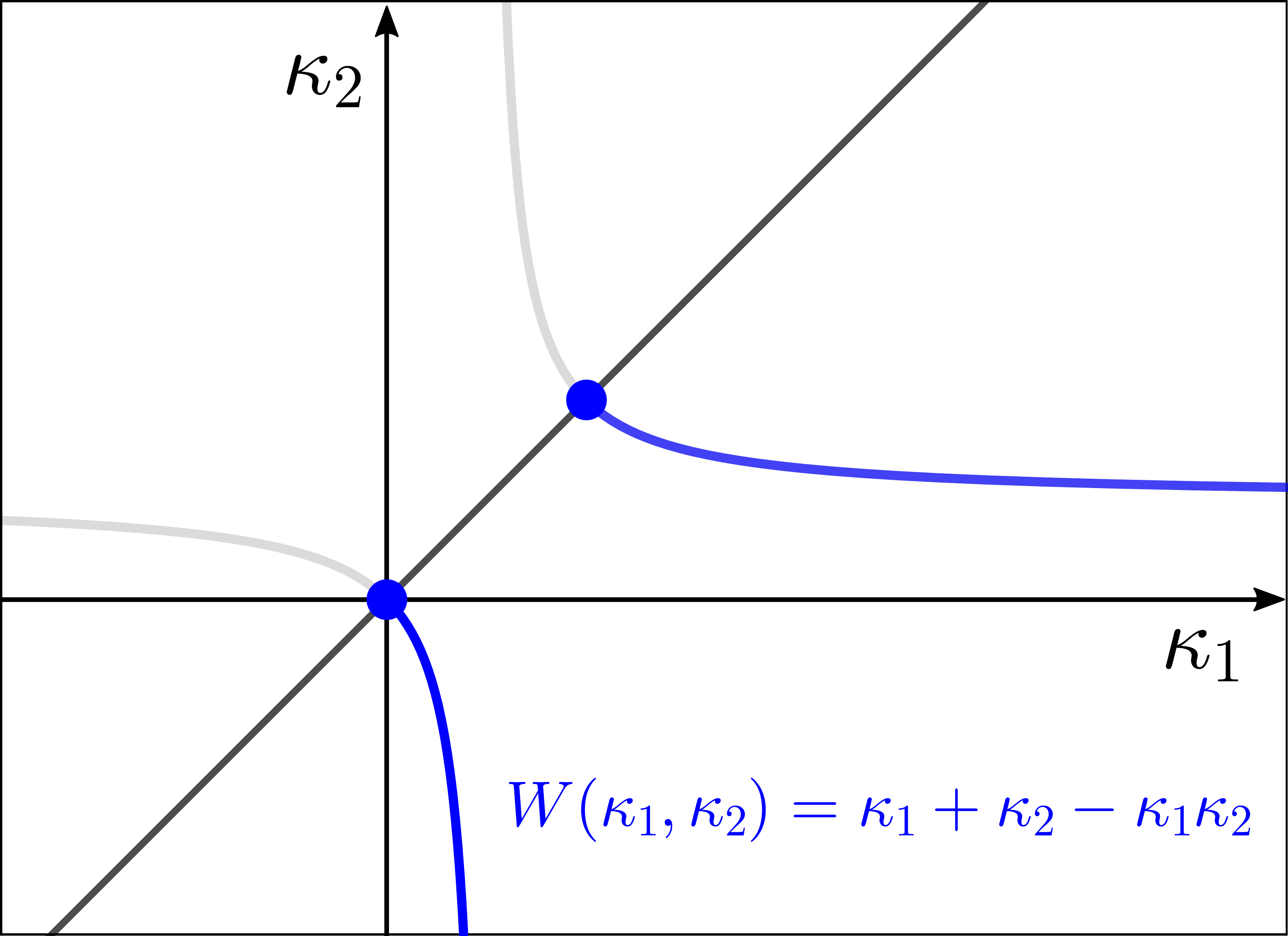}
\hspace{.2cm}
\includegraphics[scale=.1]{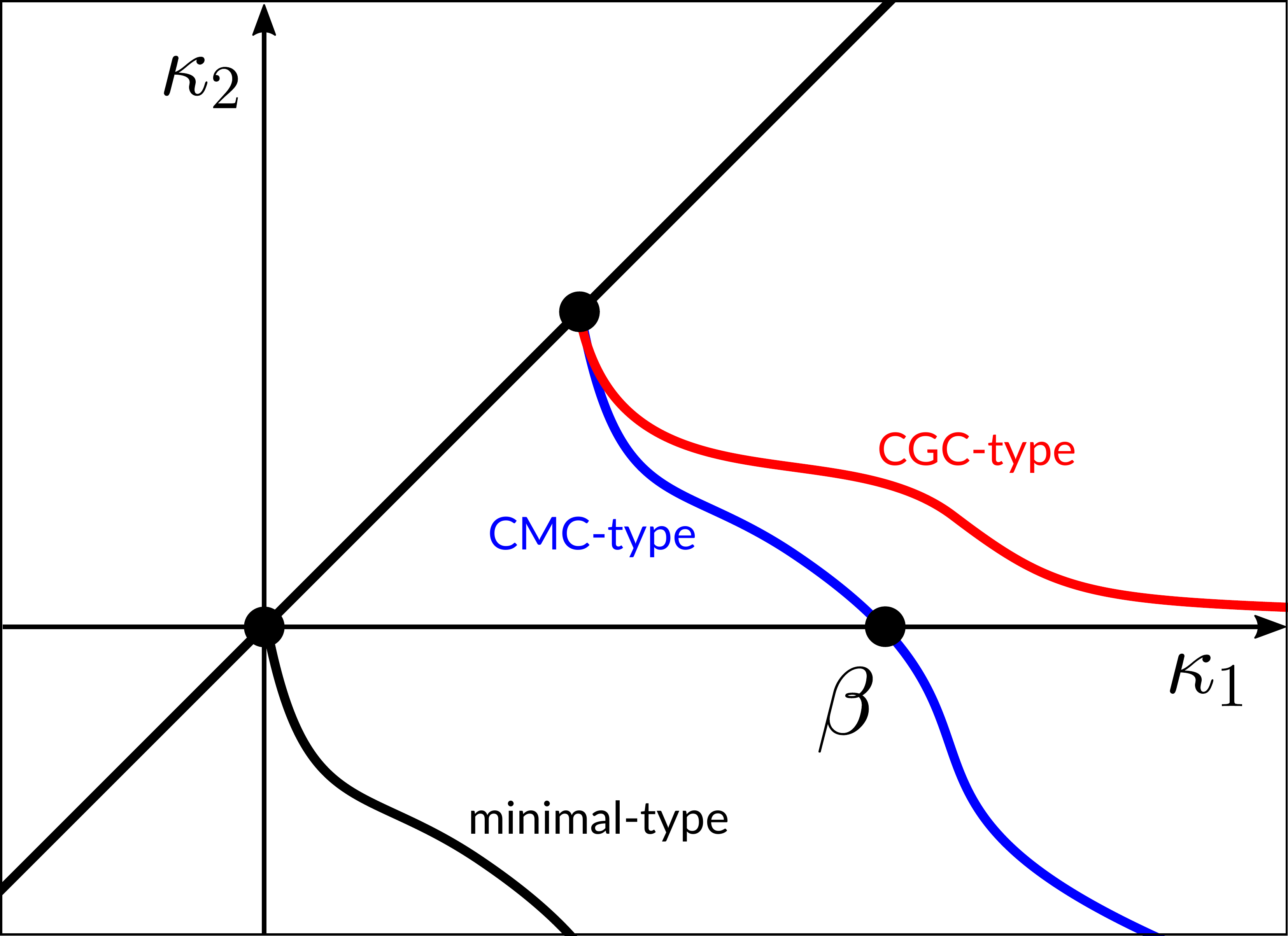}
\caption{on the left, the connected components of $\kappa_1 + \kappa_2 - \kappa_1\kappa_2 = 0$; on the right, curvature diagrams of SW-surfaces of minimal-type, CMC-type and CGC-type.}
\label{figure2}
\end{figure}

We finish this subsection, recalling that, SW-surfaces in $\mathbb{R}^3$ are usually assembled in three main groups, each exemplifying distinct geometric behaviors. We adopt the terminology used in \cite{FeMi} for the  classification of  Weingarten surfaces of revolution. Let $\mathcal{W}_g$ denote a set of elliptic Weingarten surfaces and let $\Sigma$ be a surface in $\mathcal{W}_g$. When $\alpha = 0$ we say that $\Sigma$ is of \emph{minimal-type}; when $\alpha > 0$ and there exists $\beta \in (\alpha,b)$ such that $g(\beta) = 0$, we say that $\Sigma$ is of \emph{CMC-type}; when $\alpha > 0$ and $g(t) > 0$ for all $t \in [\alpha,b)$ we say that $\Sigma$ is of \emph{CGC-type}. Examples of the curvature diagram of each type are pictured in Fig. \ref{figure2}, right. In any set of SW-surfaces of CMC-type case, in particular, we have the existence of round cylinders whose positive principal curvature is the the quantity $\beta$.

\subsection{An equivalence relation among Special Weingarten surfaces} \label{section-equivalencerelation}

In the following we need to assemble those sets $\mathcal{W}_g$ of SW-surfaces whose members are  related by a homothety.
In order to do that, we will establish an equivalence relation in the set of SW-surfaces.

Let  $\Sigma \in \mathcal{W}_g,$ $g:[\alpha, b) \rightarrow \mathbb{R},$ be an immersed surface with principal curvatures $\kappa_1 \geq \kappa_2$. Then, for any $d > 0,$ if $\widetilde{\Sigma}$ denotes the image of $\Sigma$ by a homothety of ratio $d$, its principal curvatures are given by $\widetilde{\kappa}_1 = \kappa_1/d$ and $\widetilde{\kappa}_2 = \kappa_2/d$ and it is easy to check that $\widetilde{\Sigma}$ belongs to  $\mathcal{W}_{\widetilde{g}}$, where $\widetilde{g}:[\alpha/d,b/d) \rightarrow \mathbb{R}$ is given by
\begin{equation*}
\widetilde{g}(t) = \frac{1}{d}g(d t).
\end{equation*}

Now consider  $\mathcal{W}_{g_i}$,  $g_i:[\alpha_i,b_i) \rightarrow \mathbb{R}$, where $\alpha_i \geq 0$ and $b_i \in (\alpha_i,+\infty) \cup \{+\infty\}$, $i=1,2$. We will say that $\mathcal{W}_{g_1}$ and $\mathcal{W}_{g_2}$ are  equivalent if there exists some positive number $d > 0$ such that $\alpha_2 = \alpha_1/d$, $b_2 = b_1/d$ and
\begin{equation} \label{equivalencerelation}
g_2(t) = \frac{1}{d} g_1(d t), \quad \mbox{for all} \quad t \in [\alpha_2,b_2).
\end{equation}
In other words, any surface in $\mathcal{W}_{g_2}$ is obtained as the image of some surface of in $\mathcal{W}_{g_1}$ after a homothety of ratio $d$. The relation defined above is trivially an equivalence relation in the set of all SW-surfaces. It also defines an equivalence relation in the set of functions $\{g:[\alpha,b) \rightarrow \mathbb{R}; \alpha \geq 0, b \in (\alpha,+\infty], g' < 0\}$. We denote by $[g]$ the equivalence class of the function $g$ with respect to the previous equivalence relation. 

For example, all surfaces with positive constant mean curvature can be seen as $\cup_{h\in[h_1]}\mathcal{W}_{h},$ where $h_1: [1,\infty) \rightarrow (-\infty,1]$ is given by $h_1(t) = 2 - t$.

\subsection{The  tangency  principle for Special Weingarten surfaces}
\label{section-maximum-principle}

Due to the ellipticity condition \eqref{conditionellipticity}, the partial differential equation of a SW-surface satisfies a maximum principle, and as a consequence SW-surfaces satisfy a tangency principle, as in the case of constant mean curvature surfaces.
A tangency principle for SW-surfaces can be found in \cite{BrSa} and \cite{Ko}, for example, where authors give a proof based on the classical Hadamard's linearization argument. However, in these works they only consider SW-surfaces in $\mathcal{W}_g$ for which $g'(0)$ $ = -1$. In terms of Definition \ref{defWeingarten1}, this condition is equivalent to require that $W(\kappa_1,\kappa_2)$ is symmetric in its arguments, and for any graph $\Sigma \in \mathcal{W}_g$ the underlying PDE is continuously differentiable with respect to the first and second derivatives.

When the function $W(\kappa_1,\kappa_2)$ is not symmetric in its variables, the underlying PDE is not necessarily continuously differentiable on the umbilical points of the SW-surface, and the Hadamard's linearization argument cannot be applied. A very simple example is given by SW-surfaces governed by the function $W(\kappa_1,\kappa_2) = \kappa_2 - m_0 \kappa_1$, for some $m_0 < 0$. A general tangency principle, including the non symmetric case and allowing to compare SW-surfaces that belong to different set $\mathcal{W}_g$ was proved by A.D. Alexandrov in a series of papers published in the fifties, that are well known but look neglected in some of their parts. We present below an adapted version of Theorem C in \cite{Al}-Part III, which establishes the tangency principle we desire.
\begin{theorem}\label{maximumprinciple}
Let $\mathcal{W}_{g_i}$, $i = 1,2$, be two distinct sets of elliptic Weingarten surfaces immersed in $\mathbb{R}^3$, with respect to the $C^1$ functions $g_i:[\alpha_i,b_i) \rightarrow \mathbb{R}$ satisfying $g'_i(t) < 0$ for all $t \in [\alpha_i,b_i)$, where $b_i \in (\alpha_i,+\infty) \cup \{+\infty\}$. Suppose additionally that $0 \leq \alpha_1 \leq \alpha_2$, $b_1 \leq b_2$ and $g_1(t) \leq g_2(t)$ for all $t \in [\alpha_2,b_2)$.

Let $\Sigma_1 \in \mathcal{W}_{g_1}$ and $\Sigma_2 \in \mathcal{W}_{g_2}$ be two $C^2$ surfaces immersed in $\mathbb{R}^3$ and let $p \in \Sigma_1 \cap \Sigma_2$ be either an interior or boundary tangency point between $\Sigma_1$ and $\Sigma_2$. If $\Sigma_1$ lies above $\Sigma_2$ around $p$, then $\Sigma_1$ coincides with $\Sigma_2$ in a neighborhood of $p$.
\end{theorem}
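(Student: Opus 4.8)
The plan is to reduce the tangency principle for these possibly non-symmetric elliptic Weingarten surfaces to a comparison principle for a single second-order elliptic operator acting on the \emph{difference} of two local graphs. First I would set up local coordinates adapted to the tangency point $p$: choose the common tangent plane of $\Sigma_1$ and $\Sigma_2$ at $p$ as the $xy$-plane, with the common unit normal pointing upward, and write each surface near $p$ as the graph of a function $u_i$ over a neighborhood of the origin in $\R^2$, so that $u_1(0)=u_2(0)=0$ and $\nabla u_1(0)=\nabla u_2(0)=0$. The hypothesis ``$\Sigma_1$ lies above $\Sigma_2$ around $p$'' becomes $u_1\geq u_2$ near $0$. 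The principal curvatures of each graph are the eigenvalues of the usual second fundamental form operator, and the Weingarten relation $\kappa_2=g_i(\kappa_1)$ (with $\kappa_1\geq\kappa_2$) says that $u_i$ satisfies a fully nonlinear equation $F_i(D^2u_i,Du_i)=0$ locally.

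The central step is to verify that this equation is (possibly degenerate) elliptic and, crucially, that one can still apply a maximum-principle comparison even at umbilical points, where $F_i$ need not be $C^1$ in the Hessian entries. For this I would invoke the key structural input: because each $g_i$ is $C^1$ with $g_i'<0$, the condition \eqref{conditionellipticity} holds along the curvature curve, which is exactly the ellipticity of the linearized operator \emph{away} from umbilics. Rather than linearizing each $F_i$ separately (the Hadamard trick that fails in the non-symmetric case), I would form the difference $w:=u_1-u_2$ and write, using the ordering hypotheses $\alpha_1\leq\alpha_2$, $b_1\leq b_2$, and $g_1\leq g_2$ on $[\alpha_2,b_2)$, a relation of the form
\begin{equation*}
0=F_1(D^2u_1,Du_1)-F_2(D^2u_2,Du_2)=\mathcal{L}w+(\text{lower order}),
\end{equation*}
where $\mathcal{L}$ is a linear second-order operator with continuous, positive-definite coefficient matrix obtained by the mean-value (fundamental theorem of calculus) integration of $DF$ along the segment joining the two jets. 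The comparison of $g_1$ with $g_2$ guarantees that the zeroth-order contribution has the correct sign, so that $w$ is a nonnegative supersolution of a homogeneous linear elliptic equation $\mathcal{L}w\geq 0$ vanishing to second order at the origin.

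From there the conclusion follows from the strong maximum principle (interior case) and the Hopf boundary point lemma (boundary case) applied to $w$: since $w\geq 0$ attains an interior (or boundary) minimum value $0$ at $p$ with vanishing gradient, either $w\equiv 0$ in a neighborhood, giving $\Sigma_1=\Sigma_2$ near $p$, or the Hopf lemma forces a strictly negative normal derivative, contradicting $w\geq 0$. This is precisely the content of Theorem C of \cite{Al}-Part III, adapted to our formulation.

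The main obstacle, and the reason one cannot simply cite the classical symmetric tangency principle of \cite{BrSa} or \cite{Ko}, is the lack of differentiability of $F_i$ in the Hessian variables at umbilical points when $W$ is not symmetric (equivalently when $g_i'(0)\neq -1$). The delicate point is therefore to show that the difference operator $\mathcal{L}$ still has continuous and uniformly elliptic coefficients \emph{through} the umbilic locus, even though neither $F_1$ nor $F_2$ is individually $C^1$ there. I expect this to require working with the symmetric-function parametrization of the eigenvalues and exploiting that along each curvature curve the slope $g_i'$ stays strictly negative and bounded (locally), so that the integrated coefficient matrix from the mean-value step remains positive-definite by a continuity-and-compactness argument near $p$; this is exactly the subtlety that Alexandrov's argument handles without assuming symmetry, which is why we appeal to \cite{Al} rather than reprove the tangency principle from scratch.
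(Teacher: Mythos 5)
The paper offers no proof of Theorem \ref{maximumprinciple}: it is stated as an adapted version of Theorem C of \cite{Al}--Part III, and the citation \emph{is} the proof. Your closing appeal to Alexandrov therefore lands in the same place as the paper, but the argument you sketch in between is precisely the one the paper's own discussion in Section \ref{section-maximum-principle} rules out. Writing $w=u_1-u_2$ and integrating $DF$ along the segment of jets is the Hadamard linearization, and it requires $F_i$ to be $C^1$ along that segment. Here $F_i$ is built from the \emph{ordered} eigenvalues $\kappa_1\geq\kappa_2$ of the shape operator, and for a symmetric $2\times 2$ matrix the ordered eigenvalues $\frac{a+c}{2}\pm\sqrt{\left(\frac{a-c}{2}\right)^2+b^2}$ fail to be differentiable exactly where they coincide. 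Unless $g_i'=-1$ at the umbilic value (the symmetric case treated in \cite{BrSa,Ko}), this non-differentiability survives in $F_i$, so when the segment of Hessians crosses the umbilic locus the mean-value integration yields at best bounded measurable coefficients. Your assertion that $\mathcal{L}$ ``still has continuous and uniformly elliptic coefficients through the umbilic locus'' is exactly the statement that needs proof, and it is false as stated: positive-definiteness a.e.\ can be salvaged (since $\lambda_{\min}$ is concave, $\lambda_{\max}$ is convex, both are matrix-monotone, and $g_i'<0$), but continuity cannot, and the classical Hopf strong maximum principle and boundary point lemma you invoke do not apply with merely measurable coefficients. Closing this gap is the entire content of Alexandrov's theorem, so your proposal defers the one genuinely hard step.

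A second, smaller gap: the hypotheses $\alpha_1\leq\alpha_2$, $b_1\leq b_2$ and $g_1\leq g_2$ on $[\alpha_2,b_2)$ are only mentioned in passing as giving ``the correct sign'' to a zeroth-order term, but $\Sigma_1$ and $\Sigma_2$ satisfy \emph{different} equations, so $F_1(D^2u_1,Du_1)-F_2(D^2u_2,Du_2)$ is not of the form $\mathcal{L}w$ plus lower-order terms in $w$. One must first use the ordering of the curvature diagrams to show that $u_2$ is a sub- or supersolution of the single equation $F_1=0$ (i.e.\ compare $F_1$ and $F_2$ at the same jet, checking in particular that the jet of $u_2$ lies in the domain where $F_1$ is defined, which is where $\alpha_1\leq\alpha_2$ and $b_1\leq b_2$ enter), and only then linearize the single operator $F_1$ between the two jets. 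That reduction is absent from your sketch. In short, you have correctly located the difficulty — the loss of differentiability at umbilics in the non-symmetric case — but the mechanism you propose is the one that breaks on it, and the resolution is wholly delegated to \cite{Al}, exactly as in the paper.
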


Roughly speaking, Theorem \ref{maximumprinciple} states that, if the curvature diagram of $\mathcal{W}_{g_1}$ lies in the closed region below the curvature diagram of $\mathcal{W}_{g_2}$, then no surface in $\mathcal{W}_{g_1}$ can have a tangency contact with any surface of $\mathcal{W}_{g_2}$ from above.

\begin{remark}
Let $\Sigma \in \mathcal{W}_g$ be a SW-surface for a given $g$. The reflection of $\Sigma$ with respect to any plane of $\mathbb{R}^3$ is also a surface that belongs to $\mathcal{W}_g$. In other words, $\mathcal{W}_g$ is invariant under reflections of $\mathbb{R}^3$. As a consequence of this fact, the Alexandrov reflection method also holds for any  SW-surface  in $\mathcal{W}_g$ (see chapter VII of \cite{Ho} for a detailed description about this technique).
\end{remark}
 
\section{SW-catenoids}
\label{section-catenoid}
Special Weingarten surfaces with rotational symmetry are described in details and classified in \cite{FeMi}. The minimal-type case was previously treated in \cite{SaTo1} and more recently in \cite{LoPa}. Recall that, if $\Sigma$ is a surface in $\mathcal{W}_g$ for a given $g$ with $\alpha = 0$, then $\Sigma$ is called of \emph{minimal-type}. Regular surfaces invariant by rotations in a set of SW-surfaces $\mathcal{W}_g$ of minimal-type,  when they exist, are called \emph{special catenoids} or \emph{SW-catenoids}.

According to \cite{FeMi, LoPa, SaTo1}, any SW-catenoid is obtained, up to isometries of $\mathbb{R}^3$, as the rotation around the $z$-axis of an unbounded convex planar curve of the plane $\{y=0\}$. Moreover, the corresponding revolution surface is topologically a cylinder and it is symmetric with respect to the horizontal plane $\{z=0\}$. The intersection of the plane $\{z=0\}$ with the SW-catenoid is a circle of minimal radius, called \emph{neck}. We will see that the height of the SW-catenoid with respect to the plane $\{z=0\}$ can be bounded or unbounded. 

Let us consider the particular cases where the set $\mathcal{W}_g$ of elliptic Weingarten surfaces  of minimal-type is given in terms of  $g(t) = m_0 t$, for some $m_0 < 0$, which in view of Theorem 5.3 in \cite{FeMi} admits SW-catenoids. Let $G \in \mathcal{W}_g$ be a SW-catenoid that is inward oriented. After an isometry of $\mathbb{R}^3$ we can assume that the revolution axis is the $z$-axis, and the neck is the horizontal circle $C_0$ of radius $r_0 > 0$ centered at the origin. The part of $G$ above the plane $\{z = 0\}$ can be parametrized as
\begin{equation*}
\psi(s,t) = (s \cos t, s \sin t, h(s)),
\end{equation*}
for $h: [r_0,+\infty) \rightarrow \mathbb{R}$ given by
\begin{equation} \label{heightparametrization}
h(s) = \int_{r_0}^{s}\frac{\tau^{m_0}}{\sqrt{r_0^{2m_0} - \tau^{2m_0}}}d\tau = r_0 \int_{1}^{s/r_0}\frac{1}{\sqrt{w^{-2m_0} - 1}}dw,
\end{equation}

According to \cite{FeMi, LoPa, SaTo1}, its height (w.r.t. the plane $\{z = 0\}$) is bounded if and only if $m_0 < -1$. When $m_0 < -1$ we define
\begin{equation} \label{eqbound}
\mathfrak{h}(m_0) = \lim_{s \rightarrow +\infty} \int_{1}^{s}\frac{1}{\sqrt{w^{-2m_0} - 1}}dw \in (0,+\infty),
\end{equation}
and the quantity $r_0 \mathfrak{h}(m_0)$ express the lowest upper bound to the height attained by $G$. Then, for any value $0 < h^\ast < r_0\mathfrak{h}(m_0)$ we may consider $G^\ast$ as the part of $G$ between the horizontal sections $C_0$ at height $0$ and $C_1$ at height $h^\ast$ (see Fig. \ref{figurecatenoid}, left).
\begin{figure}[h]
\centering
\includegraphics[scale=.81]{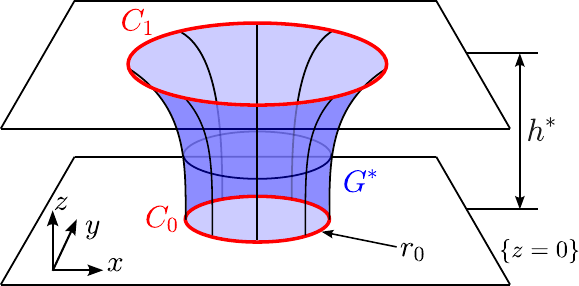}
\hspace{1.5cm}
\includegraphics[scale=.6]{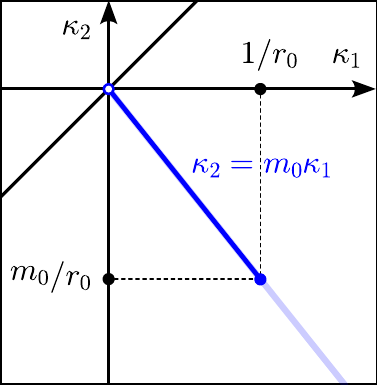}
\caption{on the left, the truncated catenoid $G^\ast$ of height $h^\ast$ and neck-size $r_0$; on the right, the curvature diagram of $G$.}
\label{figurecatenoid}
\end{figure}

Notice that
\begin{align*}
\mathfrak{h}(m_0) &= \lim_{s \rightarrow +\infty} \int_{1}^{s}\frac{1}{\sqrt{w^{-2m_0} - 1}}dw \geq \lim_{s \rightarrow +\infty} \int_{1}^{s}w^{m_0} dw \\
&= \lim_{s \rightarrow +\infty} \frac{s^{m_0 + 1} - 1}{1 + m_0} = \frac{- 1}{1 + m_0}.
\end{align*}
The integral in equation \eqref{eqbound} does not converge for $-1 \leq m_0 < 0$, then, for later use, we define
\begin{equation} \label{h-ast}
\mathfrak{h}^\ast(m_0) =
 \begin{cases}
    \frac{- 1}{1 + m_0} & \mbox{if} \quad m_0 < -1, \\
    1 & \mbox{if} \quad -1 \leq m_0 < 0.
 \end{cases}
\end{equation}

Finally, we note that the curvature diagram of $G$ is always a semi-open line segment joining $(0,0)$ (but not including it) with $(m_0/r_0,1/r_0)$, which corresponds to the principal curvatures of the neck points (see Fig. \ref{figurecatenoid}, right).

\section{The lima\c con of Pascal}
\label{limacon-section}
The \emph{lima\c con of Pascal} is a famous and classical planar algebraic curve. Thought the centuries mathematicians proposed several equivalent constructions of such curve and studied many of its properties. In the present paper the \emph{lima\c con of Pascal} turns out to be an essential tool for estimating  many radii that appear in our proofs. In this section we present the construction that is most convenient for our purposes and we describe the properties that are most useful for our proofs.

\begin{definition}\label{def_lima}
Let $\mathcal C$ be a circle in $\mathbb R^2$ and let $A\in\mathbb R^2$ be a fixed point called \emph{base point}. For any $P\in\mathcal C$ let $A_P$ be the reflection of $A$ through the straight line tangent to $\mathcal C$ in $P$. We call \emph{lima\c con of Pascal} (or just \emph{lima\c con}) the curve 
\begin{equation*}
\mathcal L:=\{\ A_P\in\mathbb R^2\ |\ P\in\mathcal C\ \}.
\end{equation*}
\end{definition}

Denoting by $C$ and $c$ respectively the center and the radius of $\mathcal C$, it is easy to prove that, up to isometries of the Euclidean plane, $\mathcal L$ is uniquely determined  by two positive parameters: $a=\mbox{dist}(A,C)$ and $c$. Moreover $\mathcal L$ depends continuously by $a$ and $c$ (see Fig. \ref{limacon1}, left). 

\begin{figure}[h]
\centering
\includegraphics[scale=.35]{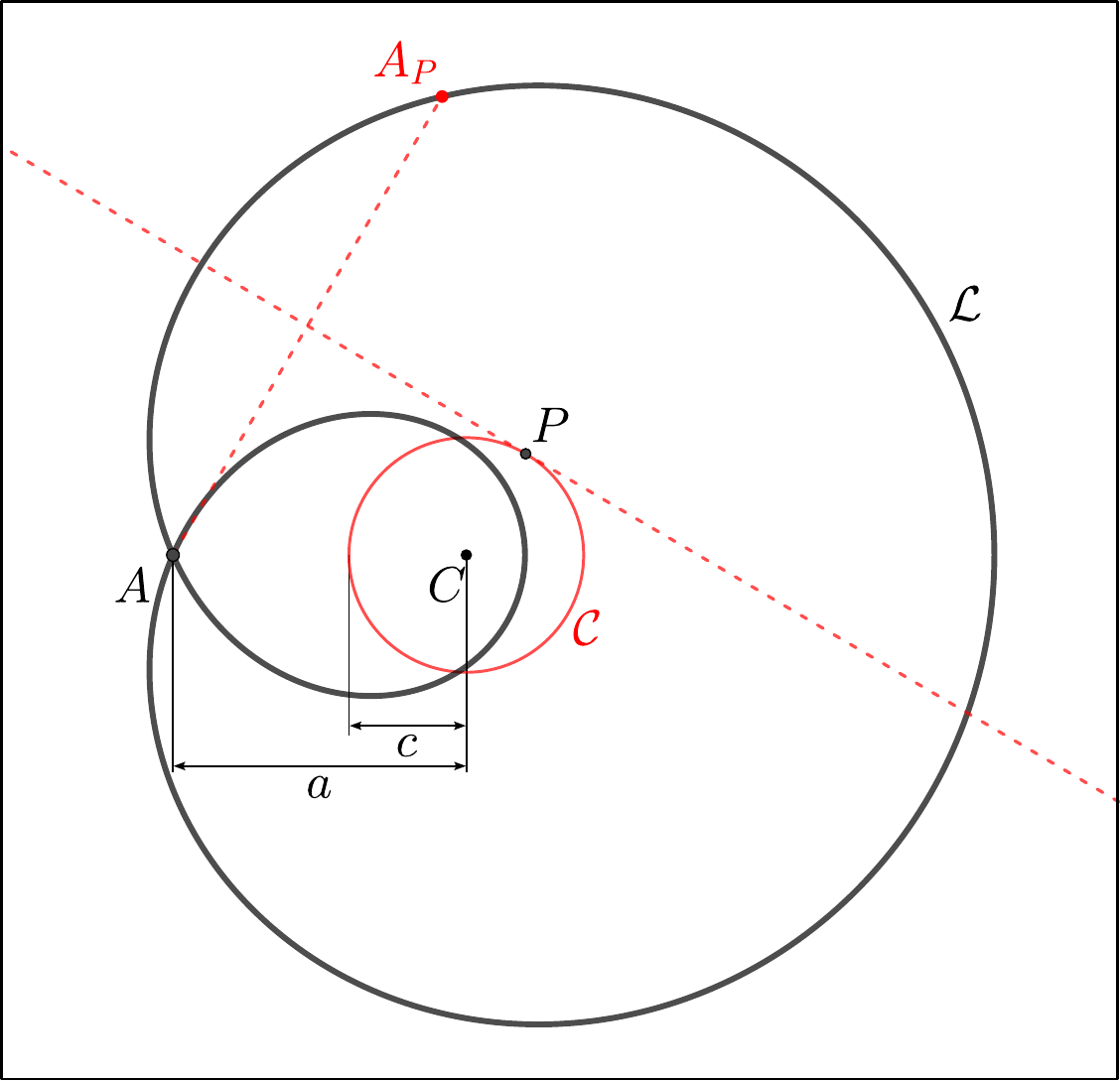}
\hspace{.3cm}
\includegraphics[scale=.35]{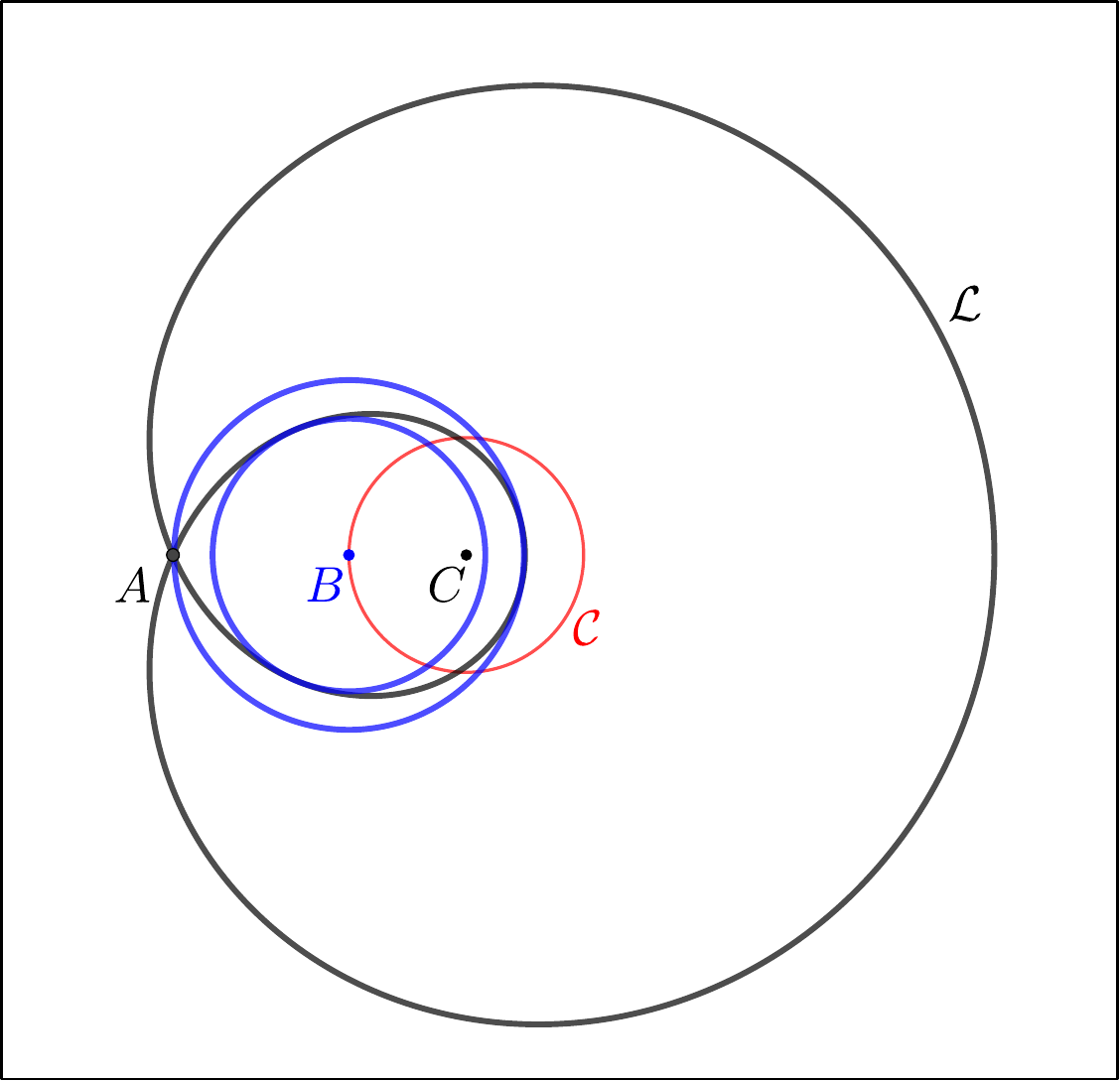}
\caption{on the left, the construction of the lima\c con of Pascal for $a=5$ and $c=2$; on the right, the lima\c con of Pascal and the disks described in Lemma \ref{lemma2_lima}.}
\label{limacon1}
\end{figure}

Let $A=(0,0)$ and $C=(a,0)$. Using Definition \ref{def_lima} and standard arguments, we can find an explicit parametrization of $\mathcal L$ in radial coordinates:
\begin{equation}\label{eq_lima}
\rho(\vartheta)=2a\cos\vartheta+2c,\quad\vartheta\in\mathbb [0,2\pi].
\end{equation}

From \eqref{eq_lima} is easy to deduce the following well known properties of the \emph{lima\c con}.

\begin{lemma}\label{lemma1_lima}
Let $\mathcal L$ be a \emph{lima\c con} with parameters $a,c>0$ and base point $A$:
\begin{enumerate}
\item[(1)] if $a<c$, then $\mathcal L$ is embedded;
\item[(2)] if $a=c$, then $\mathcal L$ has a cusp in $A$;
\item[(3)] if $a>c$, then $A$ is the unique multiple point of $\mathcal L$. Moreover $\mathcal L$ has two loops, one inside the other and branching in $A$.
\end{enumerate}
\end{lemma}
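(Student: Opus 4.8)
The plan is to read off all three statements directly from the radial parametrization \eqref{eq_lima}, working with the associated planar curve $\gamma(\vartheta)=\rho(\vartheta)(\cos\vartheta,\sin\vartheta)$, $\vartheta\in[0,2\pi]$, where $\rho(\vartheta)=2a\cos\vartheta+2c$. The whole trichotomy is governed by the zeros of $\rho$: since $\rho(\vartheta)=0$ if and only if $\cos\vartheta=-c/a$, the function $\rho$ has no zero when $a<c$ (its minimum $\rho(\pi)=2(c-a)$ is then positive), a single zero at $\vartheta=\pi$ when $a=c$ which is a double zero (as $\rho'(\pi)=-2a\sin\pi=0$), and exactly two simple zeros $\vartheta_0=\arccos(-c/a)\in(\pi/2,\pi)$ and $2\pi-\vartheta_0$ when $a>c$. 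I will also use throughout the elementary identity $|\gamma'(\vartheta)|^2=\rho'(\vartheta)^2+\rho(\vartheta)^2$, obtained by expanding $\gamma'=\rho'(\cos\vartheta,\sin\vartheta)+\rho(-\sin\vartheta,\cos\vartheta)$ in the orthonormal radial/angular frame.

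For part (1), when $a<c$ one has $\rho>0$ everywhere, so $\gamma$ is a genuine polar graph: if $\gamma(\vartheta_1)=\gamma(\vartheta_2)$ with both radii positive, uniqueness of polar coordinates forces $\vartheta_1\equiv\vartheta_2\pmod{2\pi}$, giving injectivity on $[0,2\pi)$. Moreover $|\gamma'|^2=\rho'^2+\rho^2>0$, so $\gamma$ is an immersion, and an injective immersion of the circle is an embedding. For part (2), when $a=c$ both $\rho$ and $\rho'$ vanish at $\vartheta=\pi$, so $\gamma'(\pi)=0$ and $\gamma(\pi)=A$. Setting $\vartheta=\pi+s$ and Taylor expanding gives $\rho=as^2+O(s^4)$ and $(\cos\vartheta,\sin\vartheta)=(-1,-s)+O(s^2)$, hence $\gamma=(-as^2,-as^3)+O(s^4)$; eliminating $s$ yields the semicubical relation $ay^2+x^3=0$, the normal form of an ordinary cusp, located at $A$.

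For part (3), when $a>c$ the curve passes through $A$ precisely at the two parameter values $\vartheta_0$ and $2\pi-\vartheta_0$ where $\rho=0$, so $A$ is a double point. To see it is the only multiple point I would analyze a coincidence $\gamma(\vartheta_1)=\gamma(\vartheta_2)$ by the signs of the radii: if both are positive, or both negative, uniqueness of polar coordinates again gives $\vartheta_1=\vartheta_2$; the only remaining possibility is opposite signs, which forces $\vartheta_1\equiv\vartheta_2+\pi$ and $\rho(\vartheta_1)=-\rho(\vartheta_2)$, and substituting $\cos\vartheta_1=-\cos\vartheta_2$ collapses this to $c=0$, excluded since $c>0$. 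Thus $A$ is the unique multiple point, and the two arcs $\{\rho\ge 0\}$ and $\{\rho\le 0\}$ are two Jordan loops based at $A$ (each is an embedded immersed arc by the argument of part (1), the endpoints being immersed because $\rho'\neq0$ at simple zeros). To show one loop sits inside the other, I would compare the loops ray by ray from $A$: a ray in direction $\phi$ with $\cos\phi>c/a$ meets the outer loop (where $\rho>0$) at distance $\rho(\phi)=2a\cos\phi+2c$ and the inner loop (the $\rho<0$ arc, whose points lie in direction $\vartheta+\pi$) at distance $2a\cos\phi-2c$; since these differ by $4c>0$ and the outer loop is radially simple, every inner-loop point lies strictly inside the bounded region of the outer Jordan loop.

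I expect the genuine work to be concentrated in part (3). The sign change of $\rho$ means that points with $\rho<0$ are reflected through $A$, so the naive "distinct angles give distinct points'' reasoning fails and the mixed-sign coincidence case must be treated separately; this is exactly where the hypothesis $c>0$ enters and rules out spurious self-intersections. Likewise, the qualitative assertion "one loop inside the other'' is the only part that is not a one-line computation: it requires identifying the bounded component of the outer loop (which is star-shaped with respect to $A$, since along each admissible direction the ray meets it in a single point at distance $\rho(\phi)$) and then the radial comparison above. Parts (1) and (2) are immediate once the zero structure of $\rho$ and the formula $|\gamma'|^2=\rho'^2+\rho^2$ are in hand.
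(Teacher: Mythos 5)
Your argument is correct. The paper gives no proof of this lemma at all --- it states it as a ``well known'' consequence of the polar parametrization \eqref{eq_lima} --- and your deduction (the trichotomy governed by the zeros of $\rho$, the identity $|\gamma'|^2=\rho'^2+\rho^2$, the semicubical expansion at $\vartheta=\pi$ for $a=c$, and the mixed-sign analysis plus radial comparison for $a>c$) is precisely the routine verification from \eqref{eq_lima} that the authors leave to the reader.
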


The following result will be very useful for us.

\begin{lemma}\label{lemma2_lima}
Let $\mathcal L$ be a \emph{lima\c con of Pascal} of parameters $a > c > 0$, with respect to a circle $\mathcal C$ and base point $A \in \mathbb{R}^2$. Let $B \in \mathcal{C}$ denote the point of minimal distance to $A$. Then:
\begin{itemize}
\item[(1)] the smaller loop of $\mathcal L$ bounds a disk of radius $\sqrt{\frac{(a-c)^3}{a}}$ centered at $B$;
\item[(2)] the closed  disk of center $B$ and radius $a - c > 0$ contains the smaller loop of $\mathcal L$. 
\end{itemize}
\end{lemma}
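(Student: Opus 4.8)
The plan is to work in the coordinates of \eqref{eq_lima}, placing $A$ at the origin and $C=(a,0)$, so that $\mathcal L$ is given by $\rho(\vartheta)=2a\cos\vartheta+2c$ and the point of $\mathcal C$ nearest to $A$ is $B=(a-c,0)$. Since $a>c$, the radius $\rho$ vanishes exactly for $\cos\vartheta=-c/a$, and the smaller loop is the arc on which $\rho\le 0$; along it the quantity $u:=\cos\vartheta$ runs through the interval $[-1,-c/a]$, the endpoints $u=-1$ and $u=-c/a$ corresponding respectively to the point $(2(a-c),0)$ and to $A=(0,0)$, where the loop closes up. In particular the loop meets the $x$-axis exactly at these two points, and by the symmetry $\rho(2\pi-\vartheta)=\rho(\vartheta)$ it is symmetric about that axis, so $B$, the midpoint of the segment joining them, lies in the interior of the Jordan curve formed by the smaller loop (Lemma \ref{lemma1_lima}(3)).

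Both claims then reduce to extremizing a single function. Writing a loop point as $(\rho\cos\vartheta,\rho\sin\vartheta)$, I would compute its squared distance to $B$ and rewrite it, via the substitution $u=\cos\vartheta$, as the quadratic
\[
f(u)=4ac\,u^2+4c(a+c)\,u+4c^2+(a-c)^2 .
\]
Since $4ac>0$ this parabola is convex, with vertex at $u^\ast=-\tfrac{a+c}{2a}$, and its behaviour on $[-1,-c/a]$ is governed entirely by the location of $u^\ast$.

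For part (1) I would verify that $u^\ast\in(-1,-c/a)$ — both inequalities reducing to $c<a$ — so that the minimum of the distance from $B$ along the loop is attained in the interior and equals $\sqrt{f(u^\ast)}$; a short simplification gives $f(u^\ast)=\frac{(a-c)^3}{a}$. Because $B$ is interior to the inner loop, the largest disk centred at $B$ and contained in the region it bounds has radius equal to this minimal distance, which is exactly the asserted $\sqrt{(a-c)^3/a}$. For part (2) I would evaluate $f$ at the two endpoints and find $f(-1)=f(-c/a)=(a-c)^2$; convexity with interior vertex forces the maximum of $f$ on $[-1,-c/a]$ to occur at an endpoint, so $f\le(a-c)^2$ along the whole loop, i.e.\ every point of the smaller loop lies within distance $a-c$ of $B$. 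Note that $\frac{(a-c)^3}{a}<(a-c)^2$, consistent with the inscribed disk being strictly smaller than the circumscribed one.

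The substantive points, rather than the final algebra, are the geometric bookkeeping: identifying the correct $u$-interval for the smaller loop, checking that the vertex $u^\ast$ falls strictly inside it (so that the extremum for (1) is realised on the loop and not at the branch point $A$, where the distance is $a-c$), and using the Jordan-curve structure of the inner loop to translate ``minimal distance from $B$'' into ``radius of an inscribed disk''. Once these are in place, the identities $f(u^\ast)=(a-c)^3/a$ and $f(-1)=f(-c/a)=(a-c)^2$ are direct computations.
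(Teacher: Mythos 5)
Your proposal is correct and takes essentially the same approach as the paper: the same radial parametrization $\rho(\vartheta)=2a\cos\vartheta+2c$ with $B=(a-c,0)$ shown to lie inside the inner loop, and the same squared-distance function (your $f(u)$ with $u=\cos\vartheta$ is exactly the paper's $\varphi(\vartheta)=4c(a\cos\vartheta+c)(1+\cos\vartheta)+(a-c)^2$ expanded). The only difference is that you carry out explicitly, via the convex quadratic in $u$ and the location of its vertex $u^\ast=-\tfrac{a+c}{2a}$, the optimization that the paper leaves as ``standard computations''.
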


\begin{proof}
Using the parametrization \eqref{eq_lima} with base point $A = (0,0)$ and circle $\mathcal{C}$ of radius $c$ centered in $(a,0),$ one has $B = (a-c,0)$.  The lima\c con parametrized by \eqref{eq_lima} is symmetric with respect to $x$-axis and
it has exactly three points on that axis: $A$, one on the smaller loop for $
\vartheta =\pi$ and one on the bigger loop for $\vartheta=0$. Since we have
\begin{equation*}
0 < a-c < 2a-2c = |\rho(\pi)| < 2a+2c = \rho(0),
\end{equation*}
then $B$ belongs to the closed domain bounded by the smaller loop.

We want to compute the minimum and maximum distances from $B$ to the smaller loop of $\mathcal{L}$. Let $l(\vartheta)=(2a\cos\vartheta+2c)(\cos\vartheta,\sin\vartheta)$ be a generic point of $\mathcal{L}$, then we define the function
\begin{equation*}
\varphi(\vartheta)=\mbox{dist}(B,l(\vartheta))^2=4c(a\cos\vartheta+c)(1+\cos\vartheta)+(a-c)^2.
\end{equation*}
After standard computations we can show that 
\begin{equation*}
\min\{\varphi(\vartheta); \vartheta \in [0,2\pi]\} = \frac{(a-c)^3}{a},
\end{equation*}
which proves (1). To prove (2), we need to restrict $\vartheta$ to the interval $[\pi - \arccos(c/a),\pi + \arccos(c/a)]$, which correspond to the points of the smaller loop of $\mathcal{L}$. Again, after standard computations we can show that
\begin{equation*}
\max\{\varphi(\vartheta); \vartheta \in [\pi - \arccos(c/a),\pi + \arccos(c/a)]\} = (a-c)^2,
\end{equation*}
hence (2) follows.
\end{proof}

\begin{remark}\label{rmk_lima}
Note that the function $(a,c)\mapsto \sqrt{\frac{(a-c)^3}{a}}$ is strictly increasing in the variable $a$ and strictly decreasing in the variable $c$.
\end{remark}

\section{The graph Lemma} \label{section-Lemma}

We denote by $D(p,r)$ the horizontal closed disk of radius $r$ and center $p$. Moreover, for any interval $I \subset \mathbb{R}$, we denote by $D(p,r) \times I$ the intersection of the solid cylinder on $D(p,r)$ with the closed slab $\{(x,y,z) \in \mathbb{R}^3; z \in I\}$.}

For convenience, let us recall a well known property about strictly convex planar curves: let $\Gamma \subset \mathbb{R}^2$ be a strictly convex planar curve with maximum and minimum curvature values $\Lambda$ and $\lambda$ satisfying $\Lambda > \lambda > 0$.  Then any circle of radius $1/\lambda$ (resp. $1/\Lambda$) tangent to $\Gamma$, with the same inward normal vector as $\Gamma$ at the tangency point, must bound a closed domain containing $\Gamma$ (resp. contained in the closed domain bounded by $\Gamma$). Moreover, the radius $\omega(\Gamma)$ of the smallest circle in $\mathbb{R}^2$ enclosing $\Gamma$ satisfies $1/\Lambda \leq \omega(\Gamma) \leq 1/\lambda$.

Now, let us prove a result that will be crucial in what follows.

\begin{lemma} \label{Lemasmalldisk}
Consider a set of SW-surfaces $\mathcal{W}_g$ where the umbilicity constant is $\alpha = g(\alpha) > 0$. Assume that $\Sigma \in \mathcal{W}_g$ is a compact surface with boundary such that:
\begin{itemize}
\item[1.] $\Sigma$ is embedded in the closed half-space $\mathbb{R}^3_{+} := \{z \geq 0\}$; 
\item[2.] the boundary of $\Sigma$ is a closed, smooth, strictly convex curve $\Gamma \subset P := \{z = 0\}$;
\end{itemize}
If $\Omega$ is the planar domain bounded by $\Gamma,$ then there exists $p\in \Omega$ depending on $\Gamma$ and $\Sigma$ and $r > 0$ depending only on $\Gamma$ such that $\Sigma \cap (D(p,r)\times [0,+\infty))$ is a vertical graph over $D(p,r)$, with
\begin{equation}
\sqrt{\frac{\lambda}{\Lambda^{3}}}
\leq r(\Gamma)\leq \frac{1}{\Lambda}
\end{equation}
where $\Lambda > \lambda > 0$ are, respectively, the maximum and minimum values of the curvature of $\Gamma$.
\end{lemma}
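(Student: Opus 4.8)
The plan is to reduce the statement to the planar geometry of the lima\c con of Section \ref{limacon-section} by means of an Alexandrov reflection argument across vertical planes, using the reflection invariance of $\mathcal{W}_g$ together with the tangency principle of Theorem \ref{maximumprinciple}, and then to read off both the disk and its radius from Lemma \ref{lemma2_lima}. First I would set up the planar data. Fix a point $q \in \Gamma$ and let $\nu$ be the inner unit normal of $\Gamma$ at $q$ inside $P$. By the recalled property of strictly convex curves, the circle of radius $1/\lambda$ tangent to $\Gamma$ at $q$ with normal $\nu$ bounds a disk containing $\overline{\Omega}$; let $C := q + \tfrac{1}{\lambda}\nu$ be its center, so that $\overline{\Omega} \subset D(C,1/\lambda)$ and $a := \mathrm{dist}(q,C) = 1/\lambda$. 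I then build the lima\c con $\mathcal{L}$ of Definition \ref{def_lima} with base point $A := q$ and with respect to the circle $\mathcal{C}$ centered at $C$ of radius $c := \tfrac{1}{\lambda} - \tfrac{1}{\Lambda}$. Since $\Lambda > \lambda > 0$ we have $a > c > 0$, so Lemma \ref{lemma2_lima} applies, the point $B \in \mathcal{C}$ nearest to $q$ is exactly $B = q + \tfrac{1}{\Lambda}\nu$ (the center of the small circle of radius $1/\Lambda$ tangent to $\Gamma$ at $q$, which lies in $\overline{\Omega}$), and the two lima\c con radii read $a-c = 1/\Lambda$ and $\sqrt{(a-c)^3/a} = \sqrt{\lambda/\Lambda^3}$. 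This already matches the two sides of the desired inequality and identifies the candidate center $p := B$.

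The geometric engine is the following reflection scheme. For a horizontal line $T$ in $P$, let $\widehat{T}$ be the vertical plane of $\mathbb{R}^3$ over $T$; since $\mathcal{W}_g$ is invariant under reflections, the mirror image of $\Sigma$ across $\widehat{T}$ again lies in $\mathcal{W}_g$, so $\Sigma$ and its reflection are comparable through Theorem \ref{maximumprinciple}. Running the Alexandrov moving–plane process with the one–parameter families of planes $\widehat{T}$ whose directing line sweeps toward $q$, I would show that no interior or boundary first contact can occur before the reflecting line reaches a critical position: an interior tangency would force symmetry of $\Sigma$ across $\widehat{T}$, hence symmetry of the strictly convex curve $\Gamma$ across $T$, which is impossible except for finitely many directions. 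The reflection stays admissible as long as the image of the reflected boundary arc remains inside $\overline{\Omega}$, and here the curvature bounds $\lambda \leq \kappa_\Gamma \leq \Lambda$ enter: the critical reflecting lines are precisely the tangent lines $T_P$ to the circle $\mathcal{C}$, and reflecting the base point $A=q$ across them produces exactly the lima\c con $\mathcal{L}$. Consequently the vertical projection restricted to the lowest sheet of $\Sigma$ is injective over the open region $L$ bounded by the smaller loop of $\mathcal{L}$, i.e. $\Sigma \cap (L \times [0,+\infty))$ is a vertical graph.

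To conclude, I would invoke Lemma \ref{lemma2_lima}. Part $(1)$ gives $D(B,\sqrt{(a-c)^3/a}) = D(p,\sqrt{\lambda/\Lambda^3}) \subset L$, so $\Sigma$ is a vertical graph over this disk; taking $r = r(\Gamma)$ to be the radius of the largest horizontal disk centered at $p$ over which $\Sigma$ is a graph yields the lower bound $r(\Gamma) \geq \sqrt{\lambda/\Lambda^3}$. Part $(2)$ gives $L \subset D(B,a-c) = D(p,1/\Lambda)$, which caps the graph region and hence forces $r(\Gamma) \leq 1/\Lambda$; the monotonicity recorded in Remark \ref{rmk_lima} confirms that these bounds depend only on $\lambda$ and $\Lambda$, that is, only on $\Gamma$. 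The center $p = q + \tfrac{1}{\Lambda}\nu(q)$ depends on the auxiliary point $q$, and since the admissible $q$ (the one at which the reflection sweep completes without premature contact) may vary with the surface, $p$ is allowed to depend on both $\Gamma$ and $\Sigma$, exactly as stated, while the radius estimate does not.

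The main obstacle I anticipate is Step two: rigorously organizing the moving–plane process so that the caps cut off by $\widehat{T}$ reflect to lie strictly below the remaining part of $\Sigma$ until the critical position, and, above all, proving that the admissible reflection region is controlled from below by the interior of the smaller loop of $\mathcal{L}$ rather than by the true (and uncomputable) graph region of the given $\Sigma$. This is where embeddedness, the hypothesis $\Sigma \subset \{z \geq 0\}$ with $\partial\Sigma = \Gamma \subset P$, the umbilicity $\alpha>0$ placing us in the relevant regime, and the curvature bounds on $\Gamma$ must be combined carefully; the passage from ``monotone along every admissible horizontal direction'' to ``graph over the disk $D(p,r)$'' is the delicate point, after which the lima\c con computation of Lemma \ref{lemma2_lima} is purely elementary.
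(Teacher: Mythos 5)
Your overall strategy --- sweeping vertical planes toward a boundary point $q$, recording the reflections of $q$ as a lima\c con with parameters $a=1/\lambda$ and $c=1/\lambda-1/\Lambda$, and extracting the disk and the bounds $\sqrt{\lambda/\Lambda^3}$ and $1/\Lambda$ from Lemma \ref{lemma2_lima} --- is the same as the paper's, and your identification of $p=q+(1/\Lambda)\,n(q)$ and of the two radii is correct. However, there is a genuine gap at the very start: you take $q\in\Gamma$ \emph{arbitrary}, whereas the whole argument hinges on $q$ being a point over which the vertical line meets $\Sigma$ exactly once. The paper produces such a $q$ by first running the Alexandrov reflection method with \emph{horizontal} planes $P(t)$ coming down from above: either no contact ever occurs (so $\Sigma$ is already a vertical graph over $\Omega$ and one takes $r=1/\Lambda$), or a first contact occurs at some $t_0<h_\Sigma/2$ and, by the tangency principle, it must occur at a point $q\in\Gamma$; for that $q$ one gets $\{q\}\times(0,2t_0)\subset\mathcal O$ and that $\Sigma$ meets the vertical line over $q$ in a single point. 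It is this single-intersection property that the vertical reflections then transport to the reflected points $p'$, yielding that \emph{all} of $\Sigma\cap(D(p,r)\times[0,+\infty))$ is one sheet. Your own conclusion only asserts injectivity of the projection on ``the lowest sheet of $\Sigma$'', which is strictly weaker than the statement of the lemma; without the horizontal-plane step there is no seed for the graph property, and your remark that $q$ must be ``admissible'' leaves exactly this point unresolved.

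A secondary issue: your exclusion of an interior first contact during the vertical sweep (``symmetry of $\Gamma$ across $T$ is impossible except for finitely many directions'') is not the right mechanism --- you need the argument for the specific one-parameter families of planes sweeping toward $q$, and discarding finitely many directions does not achieve that. The correct reason, as in the paper, is that as long as the moving plane has entered $\Gamma$ by less than $1/\Lambda$, every reflected boundary point lands strictly inside $\Omega$ (by the inner tangent disk of radius $1/\Lambda$), so a tangency would force, via Theorem \ref{maximumprinciple}, the reflected piece to coincide with part of $\Sigma$ while their boundaries cannot match. With the horizontal-reflection step restored and this correction, your argument coincides with the paper's proof.
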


\begin{proof}
We denote by $\mathcal{O}$ the open region of $\mathbb{R}^3$ bounded by $\Sigma \cup \Omega$.
The strategy of the proof is to find a point  $q$  on  $\Gamma$ such that $\Sigma \cap (\{q\} \times (0,+\infty))$ is only one point. 
Then, the disk $D(p,r)$ of  the statement will be obtained, by reflecting points closed to  $q$  with respect to vertical planes, and using the  Alexandrov reflection method.

In order to find $q\in\Gamma$, we apply the Alexandrov reflection method with horizontal hyperplanes $P(t) := \{z = t\}$ coming down from above. Denote by $h_{\Sigma}$ the height of $\Sigma$ above $P(0)$. The plane $P(t)$,  $t > h_{\Sigma}$, does not intersect $\Sigma$. Then we let $t$ decrease. When $t < h_{\Sigma}$, we reflect the part of $\Sigma$ above $P(t)$ until there is a first contact point between $\Sigma$ and its reflection. If such a contact point does not occur for any $t > 0$, then $\Sigma$ is a vertical graph over $\Omega$ and we can choose $r(\Gamma) = 1/\Lambda$ and $p = q + (1/\Lambda) n(q)$, where $q$ is any point of $\Gamma$ and $n(q)$ is the inward pointing unit normal vector of $\Gamma$ at $q$. 

Otherwise, there will be a $0 < t_0 < h_{\Sigma}/2$ such that the reflected surface touches $\Sigma$ for the first time. If the intersection occurs at an interior point of $\Sigma$ we would have a contradiction with the tangency principle, hence a first touching point must belong to $\Gamma$. Let $q$ be one of such points. Then the line $\{q\} \times (0,+\infty)$ intersects $\Sigma$ exactly once, and $\{q\} \times (0,2t_0)$ is contained in $\mathcal{O}$, as $t_0 < h_M/2$. Note that the portion of $\Sigma$ above $P(t_0)$ is a graph.

Let us now show the existence of the disk $D(p,r)$.

First we notice the following fact.  Let $Q$ be any vertical plane. Since $\Sigma$ is compact, we can assume that $Q \cap \Sigma = \emptyset$. Translate horizontally $Q$ towards $\Sigma$. By abuse of notation, we call again $Q$ any parallel translation of the initial plane. When $Q$ touches $\Sigma$ for the first time, keep moving $Q$ and start reflecting through $Q$ the part of $\Sigma$ left behind $Q$. In order not to have a contradiction with the tangency principle, we can continue this procedure with no contact points between $\Sigma$ and its reflection until $Q$ enters $\Gamma$ at distance at least $1/\Lambda$ from it. In fact let $p'\in \Gamma$ behind  a $Q_v,$ where $Q_v$ is at distance less than $1/\Lambda$ from $p\in\Gamma$.  Then the disk of radius $1/\Lambda$ tangent to $\Gamma$ at $p'$ is contained in $\Omega$ and its center is on the side of $Q_v$ not containing $p$. Hence, the reflection of $p'$ is inside the disk, then it is is inside $\Omega$.
  
Let $\Omega_q$ be the set of such $p'$ obtained with the above construction using just the vertical  planes that leave $q$ behind them. We have that $\Omega_q$ is a subdomain of $\Omega$ with non-empty interior and $\Omega_q\times(0,2t_0)\subset\mathcal O$. Therefore $\Omega_q$ contains the desired disk.  Its shape and size depend on $\Gamma$, but also on $q$. Since we are not able to predict where $q$ is, we can avoid the dependence of the point $q$ stopping the reflection earlier. More precisely we stop when $Q$ becomes tangent to $\mathcal{C}$, where $\mathcal{C}$ is defined as follows. Denote by $\mathcal{C}'$ the horizontal circle of radius $1/\lambda$, tangent to $\Gamma$ at $q$, and enclosing $\Gamma$. Then $\mathcal{C}$ is the circle with the same center as that of $\mathcal{C}'$ and radius equal to $1/\lambda - 1/\Lambda$. 
 
The set of the reflections of $q$ through any vertical plane tangent to $\mathcal{C}$ is a lima\c con of Pascal $\mathcal{L}$ in the plane $P$, with respect to the circle $\mathcal{C}$ and base point $q$. The parameters of $\mathcal{L}$ are $a = 1/\lambda$ and $c = 1/\lambda - 1/\Lambda$, which indicates that the lima\c cons of Pascal obtained starting with  different points  $q \in \Gamma$ are  congruent.  As $a > c$, then $\mathcal{L}$ has two loops. By item (2) of Lemma \ref{lemma2_lima}, the smaller loop $\mathcal{L}'$ of $\mathcal{L}$ is contained in the circle of radius $a - c = 1/\Lambda$ tangent to $\Gamma$ at $q$ hence it is contained in $\Omega_q \cup \{q\}$. 
 
Finally, the conclusion of the statement follows from the fact that the closed domain bounded by the smaller loop of $\mathcal{L}$ contains a closed disk $D(p,r)$, for some $p \in \Omega$ and $r = r(\Gamma)$ greater or equal than $\sqrt{\frac{(a-c)^3}{a}} = \sqrt{\frac{\lambda}{\Lambda^{3}}}$, by item (1) of Lemma \ref{lemma2_lima}, and from similar arguments we used before, $\Sigma$ is a vertical graph over $D(p,r)$.
\end{proof}

\section{Two Ros-Rosenberg type theorems for SW-surfaces} \label{section-maintheorems}

In this section we prove a Ros-Rosenberg type theorem in two cases.

In the former case, we make a natural   assumption on the function $g$ in order to get 
Ros-Rosenberg result for every surfaces  obtained from surfaces in  $\mathcal{W}_g$ after  suitable homotheties, whose ratios can be computed explicitly. We point out that  condition \eqref{assumption1} implies that, when we do a homothety of  a surface in $\mathcal{W}_g,$ its curvature diagram does not intersect  a line with slope $m_0$ passing through the origin. This allows us to compare our surface  with a suitable catenoid  by the tangency principle (see Section \ref{section-maximum-principle}).

In the latter case, we  deal with a broader set of SW-surfaces,  where we do not  place any  assumption on the analytic behaviour of the function $g$. This will prevent us the possibility of doing homotheties, but, as it will be clear later, assuming explicit mild geometric conditions on the umbilicity constant and the curvatures of the standard cylinder belonging to $\mathcal{W}_g$, we are able to prove Ros-Rosenberg result.

\begin{theorem}
\label{main-theorem}
Let $\Gamma \subset P:= \{z = 0\}$ be a strictly convex, closed planar curve and let $g:[\alpha,+\infty) \rightarrow (-\infty,\alpha]$, $\alpha > 0$, be a $C^1$ function satisfying $g(\alpha)=\alpha,$ $g'(t) < 0$ and 
\begin{equation}\label{assumption1}
g(t) \geq (1 - m_0) \alpha + m_0 t, 
\end{equation}
for some constant $m_0 < 0$ and for all $t \in [\alpha,+\infty)$. There exists $d_0 = d_0(\Gamma)$ such that for any $f \in [g]$, where
$f(t)=g(dt)/d$, $t \in [\alpha/d,+\infty)$, for some $d > d_0$, if $\Sigma$ is any compact surface embedded in $\mathbb{R}^3_+ = \{(x,y,z)∈\mathbb{R}^3; z \geq 0\}$ that belongs to  $\mathcal{W}_f$ and whose boundary spans $\Gamma$, then $\Sigma$ is topologically a closed disk. Moreover, if $\Omega$ denotes the domain bounded by $\Gamma$ in $P$ then either $\Sigma$ is a vertical graph over $\Omega$, or $\Sigma \cap(\Omega \times [0,+\infty))$ is a graph over $\Omega$ and $\Sigma - (\Omega \times [0,+\infty))$ is a graph with respect to the lines normal to $\Gamma \times [0,+\infty)$.
\end{theorem}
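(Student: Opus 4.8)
The plan is to use the homothety to reduce to a surface $\Sigma\in\mathcal W_f$ with small umbilicity constant, and then to run two Alexandrov reflection schemes---one with horizontal planes, one with vertical planes---using the SW-catenoids of Section \ref{section-catenoid} as barriers through the tangency principle of Theorem \ref{maximumprinciple}.

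I would first record what the hypotheses give. Replacing $g$ by $f(t)=g(dt)/d$, assumption \eqref{assumption1} rescales to
\begin{equation*}
f(t)\ \ge\ (1-m_0)\frac{\alpha}{d}+m_0 t\ >\ m_0 t,\qquad t\in[\alpha/d,+\infty),
\end{equation*}
so the curvature diagram of $\mathcal W_f$ lies strictly above the half-line $\{\kappa_2=m_0\kappa_1\}$, which is the curvature diagram of the minimal-type family $\mathcal W_{g_0}$, $g_0(t)=m_0 t$. Since \eqref{assumption1} persists when $m_0$ is replaced by any smaller value, I may assume $m_0<-1$, so that the SW-catenoids of $\mathcal W_{g_0}$ have finite height (Section \ref{section-catenoid}). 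The ordering $g_0\le f$, together with $0\le \alpha/d$ and the common right endpoint $+\infty$, lets me apply Theorem \ref{maximumprinciple} with these catenoids as one-sided barriers for $\Sigma$. As $g$ takes negative values, $\mathcal W_f$ is of CMC-type; writing $g(\beta)=0$, its round cylinders have radius $d/\beta$, and I would fix $d_0=d_0(\Gamma)$ so that $d>d_0$ forces $d/\beta>1/\lambda$, i.e. such a cylinder can be placed tangent to $\Gamma$ with $\Omega$ in its interior.

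Next I would extract the starting geometry from Lemma \ref{Lemasmalldisk}: there is a disk $D(p,r)$, $r=r(\Gamma)$, over which $\Sigma$ is a vertical graph, and the horizontal-plane reflection in its proof already yields the dichotomy---either $\Sigma$ is a vertical graph over all of $\Omega$, which is the first alternative and we are done, or there is a height $t_0$ above which $\Sigma$ is a graph. In the latter case I would bound $\Sigma$ by sliding a truncated SW-catenoid of $\mathcal W_{g_0}$, matched to the annular region over the exterior of $\Gamma$: its neck circle, of radius $\omega(\Gamma)$, is placed so as to enclose $\Gamma$, and the tangency principle forbids any interior first contact (which would force coincidence and contradict $\partial\Sigma=\Gamma$). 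This confines the part of $\Sigma$ lying over the exterior of $\Gamma$ below height $H_0:=\omega(\Gamma)\,\mathfrak{h}^\ast(m_0)$ and bounds its horizontal extent. I would then reflect $\Sigma$ through vertical planes swept inward along each horizontal direction; as in Lemma \ref{Lemasmalldisk}, the tangency principle prevents interior contact until the plane has entered $\Gamma$ to depth $1/\Lambda$, while the enclosing cylinder of radius $d/\beta>1/\lambda$ (a barrier within $\mathcal W_f$, cf. the Remark after Theorem \ref{maximumprinciple}) keeps the reflected exterior part inside $\Omega\times\mathbb R$ and below the top graph. Hence each inward horizontal normal line of $\Gamma\times[0,+\infty)$ meets $\Sigma$ at most once: this shows simultaneously that $\Sigma\cap(\Omega\times[0,+\infty))$ is a vertical graph over $\Omega$ and that $\Sigma-(\Omega\times[0,+\infty))$ is a graph over a subannulus of $\Gamma\times[0,+\infty)$ with respect to the normal lines. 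Gluing the top disk to this side annulus along their common curve shows $\Sigma$ is topologically a closed disk.

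I expect the main obstacle to be the side part, namely making the vertical reflection close up into a genuine normal graph over the whole annulus rather than only locally as in Lemma \ref{Lemasmalldisk}. This is exactly where the height bound $H_0=\omega(\Gamma)\,\mathfrak{h}^\ast(m_0)$ and the enclosing cylinder of radius $d/\beta$---hence the threshold $d>d_0$---are needed, to prevent $\Sigma$ from curling back over the exterior of $\Gamma$ and to force every first contact in the reflection to lie on $\Gamma$. A further point requiring care throughout is that $f$ is not symmetric, so each comparison must be carried out via the non-symmetric tangency principle of Theorem \ref{maximumprinciple}, checking the ordering $g_i\le g_j$ and the inclusion of the intervals $[\alpha_i,b_i)$ every time.
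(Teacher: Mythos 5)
Your overall frame (homothety, tangency principle with the minimal-type catenoids $\kappa_2=m_0\kappa_1$, horizontal and vertical Alexandrov reflections, and the dichotomy of the conclusion) matches the paper's, and your rescaling of \eqref{assumption1} and the reduction to $m_0<-1$ are fine. But there is a genuine gap at the heart of the argument: you never prove that $\Sigma$ stays out of the low part of the solid cylinder over $\Omega$, i.e.\ that $\Sigma\cap(\overline\Omega\times(0,h_\Sigma/2])=\emptyset$. Without this, the vertical-plane reflection cannot conclude that $\Sigma-(\Omega\times[0,+\infty))$ is a normal graph over a subannulus of $\Gamma\times[0,+\infty)$: a normal line could meet a low sheet of $\Sigma$ hanging over $\Omega$ as well as the outer part, and the reflected outer part could collide with such a sheet. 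Your substitute for this step --- a truncated catenoid whose \emph{neck of radius $\omega(\Gamma)$ encloses} $\Gamma$, lowered from above to confine the exterior part of $\Sigma$ below $\omega(\Gamma)\mathfrak h^\ast(m_0)$ --- controls the wrong region (far outside $\Gamma$, where nothing obstructs graphicality) and moreover does not run: lowering that catenoid, the first contact is typically either with the high part of $\Sigma$ over $\Omega$ (which can reach height $h_\Sigma\ge 2d/\beta\gg \omega(\Gamma)\mathfrak h^\ast(m_0)$) or at a boundary circle of the truncated catenoid, where the tangency principle gives nothing.

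The paper's proof spends essentially all of its effort on exactly this missing point, and in the opposite geometric configuration: the catenoid's \emph{neck} is placed \emph{inside} $\Omega$, with radius $r(\Gamma)\le 1/\Lambda$ given by Lemma \ref{Lemasmalldisk}, its flared top is parked high up inside the region $\mathcal O$ bounded by $\Sigma\cup\Omega$, and it is threaded \emph{downward through the inside of} $\mathcal O$ to sweep $\Omega\times[0,h^\ast]$ clean and then slid horizontally so its neck touches every point of $\Gamma$. Making room for the top of the catenoid is itself nontrivial: one first shows (via a spherical cap of radius $d/\alpha$, which produces the quantity $r_d=\sqrt{2dh^\ast/\alpha-(h^\ast)^2}$) that the section $\Omega(h_\Sigma-h^\ast)$ contains two far-apart points, then runs a lima\c con-of-Pascal construction to extract a large disk in that section, and only then descends the catenoid. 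None of the sphere barrier, the lima\c con step, or the resulting list of lower bounds on $d$ (conditions \eqref{conditioncase1}, \eqref{condition1b}, \eqref{condition2b}, \eqref{condition4} --- note they all involve $\alpha$, which your single condition $d/\beta>1/\lambda$ does not) appears in your plan. You are also missing the separate treatment of short surfaces ($h_\Sigma<2d/\beta$), where the threading has no room and one instead traps $\Sigma$ under a hemisphere of radius $d/\alpha$ to show it is a vertical graph outright. As written, your proposal would not close.
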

\begin{remark}
\label{minimal-CGC-type}
We remark that Ros-Rosenberg result  holds for any SW-surface  of either minimal-type or CGC-type with no restriction on the function $g$ and with no need of doing homotheties. The minimal-type case is trivial, since in this case the Gaussian curvature is non-positive and the Convex Hull Property holds (see \cite{Os}). Therefore, if $\Sigma$ is a minimal-type SW-surface satisfying the conditions of  Theorem \ref{main-theorem}, then $\Sigma$ must be the compact planar domain bounded by $\Gamma$ in $P$, which is, in particular, a topological disk.

On the other hand, let $\mathcal{W}_g$ be constituted of SW-surfaces of CGC-type and let $\Sigma \in \mathcal{W}_g$ satisfy the conditions of Theorem \ref{main-theorem}. In particular, $\Sigma$ has positive Gaussian curvature everywhere. Assume by contradiction  that $\Sigma$ is not a topological disk, then its Euler characteristic must be less or equal than $-1$. By applying the Gauss-Bonnet
Theorem to $\Sigma,$ we have that
\begin{equation*}
\int_{\Gamma} k_g(s) ds \leq -2 \pi - \int_{\Sigma}K dV_{\Sigma} < -2 \pi, 
\end{equation*}
where $k_g$ is the geodesic curvature function of $\Gamma$ with respect to $\Sigma$ and $s$ is the  arc-length parameter  of $\Gamma$. Now, we apply the Gauss-Bonnet Theorem to the planar domain $\Omega$  bounded by $\Gamma$ and we obtain
\begin{equation*}
\left\vert \int_{\Gamma} k_g(s)ds \right\vert \leq \int_{\Gamma} \vert k_g(s) \vert ds \leq \int_{\Gamma} \vert k(s) \vert ds = \int_{\Gamma} k(s) ds = 2\pi,
\end{equation*}
where $k$ is the curvature of $\Gamma$ in $P$. This is a contradiction. Therefore $\Sigma$ is a topological disk.
\end{remark}

\begin{remark} \label{remarkcircle}
In the particular case where $\Gamma \subset \{z=0\}$ is a circle, Ros-Rosenberg result is also elementary, whatever $\mathcal{W}_g$ is of minimal-type, CMC-type or CGC-type. In this case, for any $f \in [g]$, if $\Sigma \in \mathcal{W}_f$ is any surface embedded in $\mathbb{R}^3_{+}$ whose boundary is $\Gamma$, the Alexandrov reflection method provides that $\Sigma$ is symmetric with respect to any vertical plane. Thus $\Sigma$ must be a surface of revolution whose axis is the vertical line passing through the center of $\Gamma$, and by the classification of SW-surfaces of revolution in \cite{FeMi} and the fact that $\Sigma$ is compact, such a revolution surface must be a spherical cap.
\end{remark}

\begin{remark}
Our Theorem \ref{main-theorem} implies \cite[Theorem 2]{RoRo}. Indeed, let us take $g(t)=2-t$, which trivially satisfies condition \eqref{assumption1} for $m_0 = -1$. The corresponding set $\mathcal{W}_g$ consists of surfaces with constant mean curvature $H=1$. Then, if $d_0(\Gamma)>0$ is the number provided in the statement of Theorem \ref{main-theorem}, by taking $h(\Gamma) := 1/d_0(\Gamma)$ the conclusion follows for any $f \in [g]$ given by $f(t) = g(dt)/d$ with $d > d_0(\Gamma)$. The corresponding set $\mathcal{W}_f$ consists of surfaces with constant mean curvature $H < h(\Gamma)$.
\end{remark}

\begin{remark}
The hypothesis \eqref{assumption1} of Theorem \ref{main-theorem} includes classic sets of SW-surfaces: for example, constant mean curvature surfaces, any linear Weingarten relation of the form $\kappa_2 = c \kappa_1 + (1-c)\alpha$, for any $\alpha > 0$ and $c < 0$, and more generally any set of uniformly elliptic Weingarten surfaces $\mathcal{W}_g$, that is, when there exist constants $c_0 < C_0 < 0$ suth that the function $g:[\alpha,+\infty) \rightarrow (-\infty,\alpha]$ satisfies $c_0 \leq g'(t) \leq C_0$, for all $t \in [\alpha,+\infty)$.
\end{remark}

\begin{proof}[Proof of Theorem \ref{main-theorem}]
In view of Remark \ref{remarkcircle}, we may suppose that $\Gamma$ is not a circle, that is, $\Lambda > \lambda > 0$.

Recall that, for any $f \in [g]$ there is $d > 0$ such that $f(t) = g(dt)/d$, for $t \in [\alpha/d,+\infty)$. Then the constant of umbilicity and the positive principal curvature of the cylinders of $\mathcal{W}_f$ with respect to the inner unit normal are given by $\alpha/d$ and $\beta/d$, respectively. Our objective here is to infer the conditions on $d$ to obtain a lower bound $d_0$, such that  the conclusion of the statement holds for $\mathcal{W}_f$, if $f(t) = g(dt)/d$ and $d>d_0$. The following construction is based on arguments found in \cite{NePi} and \cite{NePiRu}.

Let $\Sigma \in \mathcal{W}_f$ be a compact surface embedded in $\mathbb{R}^3_+$ whose boundary spans $\Gamma$ and let $h_\Sigma$ be the maximum height of $\Sigma$ with respect to the horizontal plane $P$. We will divide the proof into two cases according to whether $h_\Sigma < 2d/\beta$ or $h_\Sigma \geq 2d/\beta$.

We recall that $D(p,r)$ denotes the horizontal closed disk of radius $r$ and center $p$ and if $I \subset \mathbb{R}$ is any interval, we denote by $D(p,r) \times I$ the intersection of the solid cylinder on  $D(p,r)$ with the closed slab $\{(x,y,z) \in \mathbb{R}^3; z \in I\}$.

\vspace{.3cm}
\textbf{CASE 1:} $h_\Sigma < 2d/\beta$.
\vspace{.3cm}

We will show that $\Sigma$ is a vertical graph over $\Omega$, implying in particular that it is topologically a closed disk. 

Let us fix a generic unit horizontal vector $v$ and let $Z$ be the horizontal cylinder in $\mathcal{W}_f$ whose revolution axis is the straight line orthogonal to $v$, contained in the plane $\{z = d/\beta\}$ and passing through $(c_\Gamma, d/\beta)$. In this way, the plane $P$ is tangent to $Z$, and the maximum height of $Z$ with respect to $P$ is equals to $2d/\beta$. If $Q_v$ is the vertical plane orthogonal to $v$ passing through $(c_\Gamma, d/\beta)$, we define $\widetilde{Z}$ as the part of $Z$ lying below $Q_v$ with respect to $v$, and $\widetilde{Z}(t) := \widetilde{Z} - t v$ as the image of $\widetilde{Z}$ by the translation with respect to the vector $-t v$.

For values $t \gg 0$, $\widetilde{Z}(t)$ does not touch $\Sigma$. By decreasing $t$, $\widetilde{Z}(t)$ approaches $\Sigma$ and by the tangency principle, there cannot be a first contact point between $\widetilde{Z}(t)$ and $\Sigma$ for $t > \omega(\Gamma)$. This implies that $\Sigma$ is contained in the upper (with respect to $v$) closed half-space bounded by $Q_v - (\omega(\Gamma) + d/\beta) v$. As $v$ was generically chosen, we deduce that $\Sigma$ is contained in $D((c_\Gamma,0),\omega(\Gamma) + d/\beta) \times [0,h_\Sigma]$.

Let $S \in \mathcal{W}_f$ be the round sphere of radius $d/\alpha$ centered in $(c_\Gamma,0)$ and let $S^{+}$  be the closed hemisphere of $S$ defined by   $S^{+} = S \cap \{0 \leq z \leq d/\alpha\}$. Let us choose $d > 0$ (depending only on $g$ and $\lambda$) such that 

\begin{equation} \label{conditioncase1}
1/\lambda + d/\beta < d/\alpha.
\end{equation}

This yields that $\omega(\Gamma) + d/\beta \leq 1/\lambda + d/\beta < d/\alpha$ and then, the cylinder $\partial S^{+}\times [0,+\infty)$ strictly contains the truncated solid cylinder $D((c_\Gamma,0),\omega(\Gamma) + d/\beta) \times [0,h_\Sigma]$ in its interior. Define $S(\tau) := S^{+} + \tau e_3$ to be the translation of $S^{+}$ by the vector $\tau e_3$. For any $\tau > 2d/\beta,$ the surfaces $S(\tau)$ and $\Sigma$ do not meet. By decreasing $\tau$, if there is a first contact point between $S(\tau_0)$ and $\Sigma$, for some $\tau_0 \geq 0$, such a contact point must be necessarily an interior tangency point, since the domain bounded by the orthogonal projection of the boundary of $S(\tau_0)$ on $P$ strictly contains $\Gamma$. The existence of such an interior contact point is a contradiction by the tangency principle. Therefore, $\Sigma$ must be strictly below $S(0)$.

Since $1/\lambda < d/\alpha$, we can translate $S(0)$ horizontally in such a way that its boundary touch every point of $\Gamma$ we desire from outside. Again by the tangency principle, this implies that $\Sigma$ is contained in the solid cylinder $\overline{\Omega}\times[0,\infty)$. Now, it is easy to prove, using the Alexandrov reflection method  with  horizontal planes, that $\Sigma$ is a graph over $\Omega$ and therefore a topological disk.

\vspace{.3cm}
Notice that up to now we did not need assumption \ref{assumption1} on $g$. As a matter of  fact, we only use assumption \ref{assumption1} in Case 2.

\vspace{.3cm}
\textbf{CASE 2:} $h_\Sigma \geq 2d/\beta$.
\vspace{.3cm}

Let $G$ be the SW-catenoid defined in Section \ref{section-catenoid} where $m_0$ is defined in \eqref{assumption1} and whose neck is a circle $C_0$ in $P$ of radius $r_0 = r(\Gamma)$ given in Lemma \ref{Lemasmalldisk}. Consider $G^\ast$ as the part of $G$ between the neck $C_0$ and the horizontal section $C_1$ at height $h^\ast := r(\Gamma) \mathfrak{h}^\ast(m_0)$, where $\mathfrak{h}^\ast(m_0)$ is defined in \eqref{h-ast}. Finally, denote by $r_1$ the radius of $C_1$. Notice that $h^\ast$ and $r_1$ only depends on $\Gamma$ and $[g]$.
 
Let $V$ be the set of the horizontal vectors $v$ for which $C_0 + v \subset \Omega$. By taking $R_1:= \omega(\Gamma) + r_1 - r_0 > 0$, for all $v \in V$ we have $C_1 + v \subset D((c_\Gamma,h^\ast),R_1)$. We observe that $R_1$ depend only on $\Gamma$, $[g]$ and $h^\ast$ (see Fig. \ref{initialconsiderations}, left).
\begin{figure}[h]
\centering
\includegraphics[scale=.9]{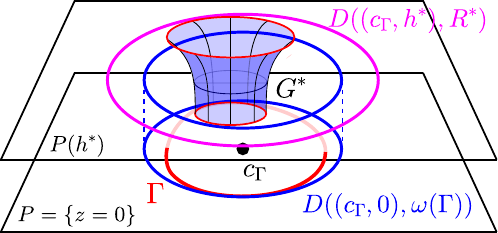}
\hspace{.5cm}
\includegraphics[scale=.6]{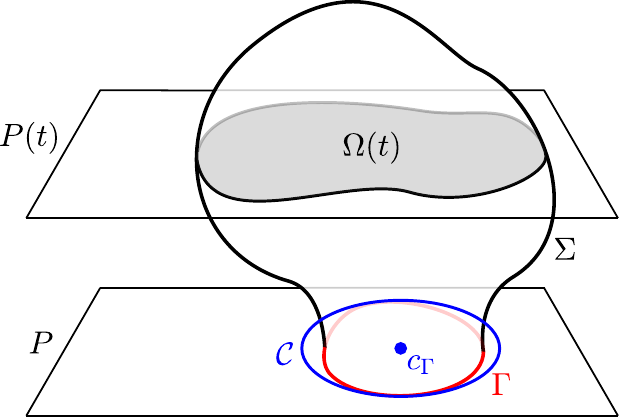}
\caption{on the left, the disk $D((c_\Gamma,h^\ast),R_1)$; on the right, the open region $\Omega(t)$.}
\label{initialconsiderations}
\end{figure}

For any $t \geq 0$, we define $\Omega(t)$ as the open region of the horizontal plane $P(t):= \{z = t\}$ bounded by $\Sigma \cap P(t)$ (see Fig. \ref{initialconsiderations}, right). For almost all values $t \geq 0$, $\Omega(t)$ is well defined. For later use, we need to define the following number:
\begin{equation} \label{defrf}
r_d := \sqrt{\frac{2 d h^\ast}{\alpha} - (h^\ast)^2}.
\end{equation}
The condition required to $r_d$ be well defined is that
\begin{equation} \label{condition0}
d > \frac{1}{2}\alpha h^\ast.
\end{equation}
Moreover, in order to  guarantee the next construction makes sense, we need that 

\begin{equation} \label{condition0a}
h^\ast < h_\Sigma/2.
\end{equation}

As we are working in CASE 2, that is, with the assumption $h_\Sigma \geq 2 d/\beta$, condition \eqref{condition0a} is guaranteed by requiring that 
 
\begin{equation} \label{condition0b}
d > \beta h^\ast.
\end{equation}

Notice that condition \eqref{condition0b} implies condition \eqref{condition0} as $\beta > \alpha$.

\vspace{.3cm}
\textbf{Claim 1.} \emph{The closure of $\Omega(h_\Sigma - h^\ast)$ contains two points $p$ and $q$ such that $\mbox{\rm{dist}}(p,q) \geq r_d$.}
\vspace{.3cm}

\begin{proof}[Proof of Claim 1]
Let $p_{\max} \in \Sigma$ be a point at maximal height and let $\mathcal{O}$ denote the open region bounded by $\Sigma \cup \Omega$. By doing the Alexandrov reflection method with horizontal planes, we get that the reflection of $p_{\max}$ with respect any plane $P(t)$ for $h_\Sigma/2 < t < h_\Sigma$ lies in $\mathcal{O}$. 
This means the reflection of $p_{\max}$ with respect to $P(h_\Sigma/2)$ lies in the closure of $\Omega$ and the orthogonal projection $p$ of $p_{\max}$ on $P(h_\Sigma - h^\ast)$ lies in the closure of $\Omega(h_\Sigma - h^\ast)$. Up to horizontal translations, we can assume that $p_{\max}$ lies in the axis $\{(0,0)\} \times \mathbb{R}$. We shall show that there exists at least a point $q$ in the closure of $\Omega(h_\Sigma - h^\ast)$ whose distance from $p$ is greater or equal than $r_d$. Suppose that this does not occur and denote by  $\Sigma^\ast$ the connected component of $\Sigma \cap \{h_\Sigma - h^\ast \leq z \leq h_\Sigma\}$ containing $p_{\max}$. Then all the points of $\partial\Sigma^\ast = \Sigma^\ast \cap P(h_\Sigma - h^\ast)$ are at a distance less than $r_d$ from the axis $\{(0,0)\} \times \mathbb{R}$. Consider the round sphere of radius $d/\alpha$ centered at $(0,0,h^\ast - d/\alpha)$  in  $\mathcal{W}_f$, and define $S^{+}$ as its connected component  in the upper closed half-space $\{z \geq 0\}$. Then $S^{+}$ is a spherical cap of height $h^\ast$ whose boundary is the circle in $P$, centered at the origin with radius $r_d$. Let $S^{+}(t)$ be the vertical translation of $S^{+}$ by the vector $t e_3$. For each $t > h_\Sigma$, $S^{+}(t)$ does not touch $\Sigma^\ast$. Then, we decrease $t$ and from our assumption, we have that for any $t \in [h_\Sigma - h^\ast,h_\Sigma]$ the boundary of $S^{+}(t)$ does not meet the boundary of $\Sigma^\ast$. Moreover, for all $t > h_\Sigma - h^\ast$ there is no interior contact point between $S^{+}(t)$ and $\Sigma^\ast$, otherwise, by the tangency principle, $\Sigma$ would coincide with $S^{+}(t)$ and hence it  would have a point at height larger than $h_\Sigma$. On the other hand, since $p_{\max}$ has height $h_\Sigma$, there should be a first interior contact point between $S^{+}(h_\Sigma - h^\ast)$ and $\Sigma^\ast$. By the tangency principle, $S^{+}(h_\Sigma - h^\ast)$ and $\Sigma^\ast$ must coincide. This is again a contradiction because the points at the boundary of $S^{+}(h_\Sigma - h^\ast)$ are at a distance $r_d$ from the axis $\{(0,0)\} \times \mathbb{R}$ while $\Sigma^\ast$ has no such points. 
\end{proof}

Before stating the next Claim, let us disclose in advance that we will do a lima\c con of Pascal construction (see Section \ref{limacon-section}) and we will require that the lima\c con has two loops. This will be accomplished if the following inequality holds:
\begin{equation}
\label{condition1a}
r_d > 2/\lambda,
\end{equation}
which, by \eqref{defrf}, also writes as follows
\begin{equation}\label{condition1}
\alpha/d < \frac{2 h^\ast}{4/{\lambda^2} + (h^\ast)^2}.
\end{equation}
By a straightforward computation, we get that  inequality \eqref{condition1} is verified if we choose $d > 0$ such that
\begin{equation}\label{condition1b}
d > \frac{\alpha}{2 h^\ast}(4/\lambda^2 + (h^\ast)^2)
\end{equation}
\vspace{.3cm}
\textbf{Claim 2.} {\em The open region $\Omega(h_\Sigma - h^\ast)$ contain a closed disk of radius
\begin{equation}
\label{ineq-claim2}
R^\ast \geq \sqrt{\frac{(r_d - 2 \omega(\Gamma))^3}{r_d - \omega(\Gamma)}}.
\end{equation}
Moreover the distance from its center $p^\ast$ to the straight line $\{c_\Gamma\} \times \mathbb{R}$ is $\omega(\Gamma)$.}
\vspace{.3cm}

\begin{proof}[Proof of Claim 2] 
Let $p$ and $q$ the points given in Claim 1. Recall that the orthogonal projection of $p$ over $P$ is contained in the closure of $\Omega$. We define $\mathcal{C}^\ast := \mathcal{C} \times \{h_\Sigma - h^\ast\},$ that is the circle of the plane $P(h_\Sigma - h^\ast)$ centered at $c_\Gamma^\ast := (c_\Gamma,h_\Sigma - h^\ast)$ with radius $\omega(\Gamma)$. In particular, as condition \eqref{condition1a} guarantees that  $r_d  > 2\omega(\Gamma)$ and  as $p$ is contained in the closed domain bounded by $\mathcal{C}^\ast,$ we deduce that $q$ lies outside the compact planar domain bounded by $\mathcal{C}^\ast$ in $P(h_\Sigma - h^\ast)$. Let $Q$ be any vertical plane tangent to $\mathcal{C}^\ast$ and consider $q(Q)$ the reflection of $q$ across the plane $Q$. The geometric locus of the points $q(Q)$ is a lima\c con of Pascal $\mathcal{L}$ of $P(h_\Sigma - h^\ast)$ with respect to the circle $\mathcal{C}^\ast$ and base point $q$. We denote by $a = \mbox{dist}(c_{\Gamma}^\ast, q)$ and $c = \omega(\Gamma)$ the parameters of $\mathcal{L}$. Using the triangular inequality, we have that
\begin{equation}\label{rh1}
r_d = \mbox{dist}(p,q) \leq \mbox{dist}(p,c_\Gamma^\ast) + \mbox{dist}(q,c_\Gamma^\ast) \leq \omega(\Gamma) + \mbox{dist}(q,c_\Gamma^\ast) = \omega(\Gamma) + a.
\end{equation}
and since the second parameter $c$ is the radius $\omega(\Gamma)$ of $\mathcal{C}^\ast$ we deduce that
\begin{equation} \label{l1l2}
a - c \geq r_d - 2 \omega(\Gamma) > 0.
\end{equation}

Inequality \eqref{l1l2} guarantees that $\mathcal{L}$ has two loops (see Fig. \ref{limaconClaim2}). Using similar arguments as in the proof of Lemma \ref{Lemasmalldisk}, we deduce that the smaller loop $\mathcal{L}'$ of $\mathcal{L}$ bounds a closed domain contained in $\Omega(h_\Sigma - h^\ast)$.
\begin{figure}[h]
\centering
\includegraphics[scale=.7]{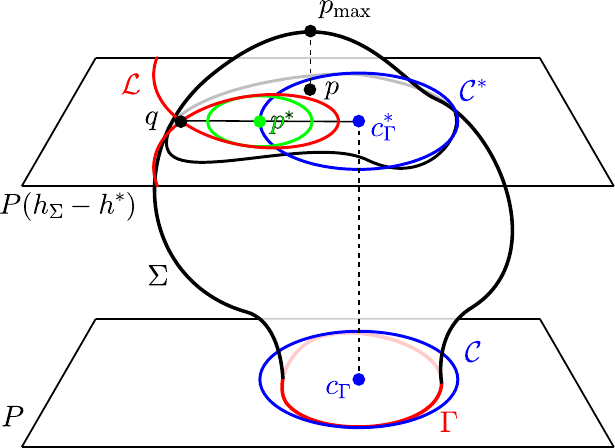}
\caption{the construction of the lima\c con of Pascal in Claim 2.}
\label{limaconClaim2}
\end{figure}

If we consider $p^\ast \in \mathcal{C}^\ast$ as the point of least distance from $q$, by the properties of the lima\c con of Pascal, $\mathcal{L}'$ bounds a closed domain which contains $p^\ast$ in its interior. To finish the proof, we take the greatest closed disk centered at $p^\ast$ that is contained in the closed domain bounded by $\mathcal{L}'$. Denoting by $R^\ast$ the radius of this disk, by (1) of Lemma \ref{lemma2_lima}, $R^\ast = \sqrt{\frac{(a - c)^3}{a}}$. From inequality \eqref{rh1} and Remark \ref{rmk_lima}, the quantity one has

\begin{equation*}
R^\ast = \sqrt{\frac{(a - c)^3}{a}} \geq \sqrt{\frac{(r_d - 2 \omega(\Gamma))^3}{r_d - \omega(\Gamma)}},
\end{equation*}
finishing the proof of Claim 2.
\end{proof}

\vspace{.3cm}
Let $c^\ast$ be the orthogonal projection of $p^\ast$ on the plane $P$. A sufficient condition implying that the disk $D(c_\Gamma,R_1)$ described in the beginning of the proof of CASE 2 is contained in the disk $D(c^\ast,R^\ast)$ is that
\begin{equation}\label{sufficientinequality}
R^\ast > \mbox{dist}(c^\ast,c_\Gamma) + R_1 = 2\omega(\Gamma) + r_1 -r_0.
\end{equation}
In view of inequality \eqref{ineq-claim2}, to obtain inequality \eqref{sufficientinequality} is sufficient to impose that 
\begin{equation} \label{condition2}
\sqrt{\frac{(r_d - 2 \omega(\Gamma))^3}{r_d - \omega(\Gamma)}} > 2\omega(\Gamma) + r_1 - r_0.
\end{equation}
We now look for which condition we must infer on $d>0$ in order to condition \eqref{condition2} be verified. Note that the function $j:[2\omega(\Gamma),+\infty) \rightarrow \mathbb{R}$ given by
\begin{equation*}
j(s) = \sqrt{\frac{(s - 2 \omega(\Gamma))^3}{s - \omega(\Gamma)}},
\end{equation*}
is strictly increasing and $\lim_{s \rightarrow +\infty}j(s) = + \infty$, regardless the value of $\omega(\Gamma) > 0$. Then, there exists $ s_0(\Gamma) \in [2\omega(\Gamma),+\infty)$ such that $j(s) > 2\omega(\Gamma) + r_1 - r_0$ for any $s \geq s_0(\Gamma)$. By the definition of $r_d$ (see \eqref{defrf}), in order to have $r_d \geq s_0(\Gamma)$, it is enough to choose $d > 0$ such that
\begin{equation} \label{condition2b}
d \geq \frac{\alpha}{2 h^\ast}(s_0(\Gamma)^2 + (h^\ast)^2).
\end{equation}
Here again the condition on $d$ depend only on $\Gamma$ and $[g]$.

\vspace{.3cm}
\textbf{Claim 3:} The intersection between $\Sigma$ and the truncated solid cylinder $D(c^\ast,R^\ast) \times [h^\ast,h_\Sigma - h^\ast]$ is empty.
\vspace{.3cm}

\begin{proof}[Proof of Claim 3] 
From Claim 2, we know that $D(p^\ast,R^\ast)$ is contained in $\Omega(h_\Sigma - h^\ast) \subset \mathcal{O}$. Denote by $\Sigma(t)$ the part of $\Sigma$ above $P(t)$ and by $\widetilde{\Sigma}(t)$ the reflection of $\Sigma(t)$ with respect to $P(t)$. The Alexandrov reflection method  with horizontal planes implies that $\Sigma(t)$ is a vertical graph and $\widetilde{\Sigma}(t)$ is contained in $\mathcal{O}$, for all $t \in [\frac{h_\Sigma}{2},h_\Sigma]$. From these facts we conclude that for all $t \in [\frac{h_\Sigma}{2},h_\Sigma - h^\ast]$ the reflection of $D(p^\ast,R^\ast)$ with respect to $P(t)$ is contained in $\mathcal{O}$, whence we deduce that $D(c^\ast,R^\ast) \times [h^\ast,h_\Sigma - h^\ast]$ is also contained in $\mathcal{O}$.
\end{proof}

\vspace{.3cm}
\textbf{Claim 4:} The intersection between $\Sigma$ and the region given by $\Omega \times [0,h^\ast]$ is empty.
\vspace{.3cm}

\begin{proof}[Proof of Claim 4]
To prove Claim 4, we consider the truncated catenoid $G^\ast$ with boundary $C_0 \cup C_1$ defined in the beginning of the proof of CASE 2. We need that a suitable translation of $G^\ast$ fits in the compact region $D(c^\ast,R^\ast) \times [h^\ast,h_\Sigma - h^\ast]$. Then, one needs $(h_\Sigma - h^\ast) - h^\ast \geq h^\ast$. Using the general assumption of CASE 2 that $h_\Sigma \geq 2 d/\beta$, the previous condition is accomplished by requiring that
\begin{equation} \label{condition4}
d > \frac{3}{2}\beta h^\ast.
\end{equation}
Notice that, inequality \eqref{condition4} implies inequality \eqref{condition0b}.

Choosing $d > 0$ satisfying conditions \eqref{condition1b}, \eqref{condition2b} and \eqref{condition4}, we have that $R^\ast$ satisfies inequality \eqref{sufficientinequality}, and by Claim 3 we can consider a translated copy $\widetilde{G}$ of $G^\ast$ that is strictly contained in $D(c^\ast,R^\ast) \times [h^\ast,h_\Sigma - h^\ast]$. Moreover, if $c_0 \in P$ and $r_0 = r(\Gamma)$ denote the center and radius of the disk given by Lemma \ref{Lemasmalldisk}, respectively, we can suppose additionally that the smaller boundary of $\widetilde{G}$ coincides with the boundary of $D((c_0,h^\ast),r_0)$.

Now we denote by $\widetilde{G}(t)$ the image of $\widetilde{G}$ by the translation through the vector $-t e_3$. There is no values of $t \in [0,h^\ast]$ for which the boundary of $\widetilde{G}(t)$ meets $\Sigma$, because the orthogonal projection of the smaller boundary of $\widetilde{G}(t)$ over $P$ is contained in $D(c_0,r_0) \subset \Omega$, and the larger boundary of $\widetilde{G}(t)$ is contained in $D(c^\ast,R^\ast) \times [h^\ast,h_\Sigma - h^\ast]$, which in turn is contained in $\mathcal{O}$. Thus, a possible first contact point between $\widetilde{G}(t_1)$ and $\Sigma$ for some $t_1 \in [0,h^\ast]$ would be an interior point for both surfaces. If this happens, one has a contradiction by the tangency principle. Let us prove it. Since $G$ is a special catenoid given by $\kappa_2 = m_0 k_1$, it is inward oriented and since assumption \eqref{assumption1} implies that $f(t) \geq (1-m_0)\alpha + m_0 t$, for all $t \in [\alpha/d, +\infty)$, the curvature diagram of $G$ lies strictly below the curvature diagram corresponding to the set $\mathcal{W}_f$ (see Fig. \ref{figureDCPhypothesis}, left). Then we can apply the tangency principle at a first interior contact point of $\widetilde{G}(t_1)$ and $\Sigma$ to conclude that they must coincide, which is a contradiction. Therefore, there is no interior contact point for all $t \in [0,h^\ast]$ and $\widetilde{G}(h^\ast)$ is contained in $\mathcal{O}$. Notice that, the smaller boundary of $\widetilde{G}(h^\ast)$ is contained in $\Omega$. Moreover, since $r_0=r(\Gamma) \leq 1/\Lambda$, then, for any point of $w \in \Gamma$, there is a horizontal translation of $\widetilde{G}(h^\ast)$ such that its smaller boundary is tangent $\Gamma$ at $w$. Notice that, because of inequality \eqref{sufficientinequality}, by performing such translations, the upper boundary of the translations of $\widetilde{G}(h^\ast)$ stays inside $\mathcal{O}$. Finally, there cannot be any interior contact point between the horizontal translations of $\widetilde{G}(h^\ast)$ and $\Sigma$ by the tangency principle. Then the intersection between $\Sigma$ and the region given by $\Omega \times [0,h^\ast]$ is empty.
\end{proof}

Now, we choose $d$ satisfying the conditions \eqref{conditioncase1}, \eqref{condition1b}, \eqref{condition2b} and \eqref{condition4} and we conclude the proof  by the following arguments.
\begin{itemize}
\item[(1)] By Claim 4, $\Sigma$ is outside the region $\Omega \times [0,h^\ast]$.
\item[(2)] By construction $D(c^\ast,R^\ast)$ contains $\Omega$. This fact, together with Claim 3, gives that $\Sigma \cap \{h^\ast \leq z \leq h_\Sigma - h^\ast\}$ is outside the region $\overline{\Omega} \times [h^\ast,h_\Sigma - h^\ast]$.
\item[(3)] From (1), (2) and the fact that $h_\Sigma - h^\ast > \frac{h_\Sigma}{2}$, we deduce that $\Sigma_0 := \Sigma \cap \{0 \leq z \leq \frac{h_\Sigma}{2}\}$ is outside the region $\overline{\Omega} \times [0,\frac{h_\Sigma}{2}]$, and by the Alexandrov reflection method with vertical planes, $\Sigma_0$ is a graph with respect to the radial directions orthogonal to $\Gamma \times [0,\frac{h_\Sigma}{2}]$. In particular, $\Sigma_0$ has the topology of an annulus.
\item[(4)] The Alexandrov reflection method for horizontal planes implies that $\Sigma_1 := \Sigma \cap \{\frac{h_\Sigma}{2} \leq z \leq h_\Sigma\}$ is a vertical graph.
\item[(5)] Since $\Sigma = \Sigma_0 \cup \Sigma_1$ and $\Sigma_0 \cap \Sigma_1$ is a closed simple curve (because $\Sigma$ is embedded and $\Sigma_0$ is a radial graph with respect to the directions orthogonal to $\Gamma \times [0,\frac{h_\Sigma}{2}]$), we conclude that both $\Sigma_1$ and $\Sigma$ have the topology of a closed disk.
\end{itemize}
\end{proof}

Our next objective is to prove Ros-Rosenberg Theorem without assuming condition \eqref{assumption1} on the function $g$, but only a geometrical condition (depending on the curvature of the given boundary $\Gamma$) on the umbilicity constant and the mean curvature of the standard cylinder in $\mathcal{W}_g$. 

Before stating the theorem, let us explain the strategy of the proof. The idea is to use the same construction as in  the proof of Theorem \ref{main-theorem}, with different choices for some of the parameters. 

Recall that the construction in the proof of Theorem \ref{main-theorem} was based on the conditions \eqref{conditioncase1}, \eqref{condition1b}, \eqref{condition2b} and \eqref{condition4}, where the choice of $d > 0$ sufficiently large was enough to get the conclusion. Notice that, the choice of $d$ determines a homothety of the involved surfaces.

In view of Assumption \eqref{assumption1}, we were able to  compare the homothety of $\Sigma$ with $G$ without restrictions (see Fig. \ref{figureDCPhypothesis}, left), because  for any surface $\Sigma$ in $\mathcal{W}_g$ and for any homothety of ratio $d > 0$, the principal curvature diagram of the image of $\Sigma$ by the homothety lies above $\kappa_2 = m_0 \kappa_1$, that is, the curvature diagram of the set of minimal-type SW-surfaces containing the SW-catenoid $G$. Now, since we do not assume condition \eqref{assumption1}, we do not have a natural choice for $m_0$. Furthermore, we consider broader sets of SW-surfaces, including for example those whose principal curvature diagram $\kappa_2=g(\kappa_1)$ is asymptotic to a vertical line. In this case, whatever is $m_0 < 0$, the principal curvature diagram of a homothety of a surface $\Sigma\in{\mathcal W}_g$ may not lie above the line $k_2 = m_0 k_1$ (see Fig. \ref{figureDCPhypothesis}, right).

\begin{figure}[h]
\centering
\includegraphics[scale=.8]{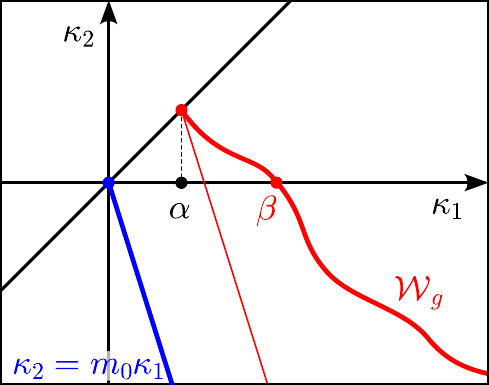}
\hspace{1.5cm}
\includegraphics[scale=.8]{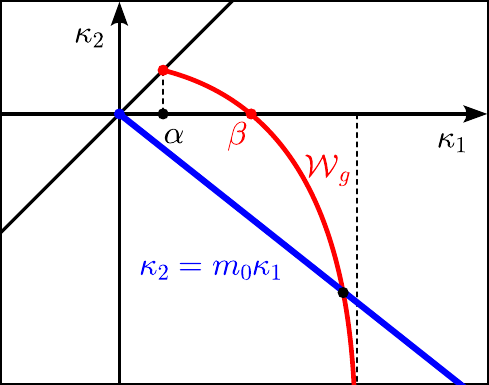}
\caption{on the left, the curvature diagram of $\mathcal{W}_g$ under assumption \eqref{assumption1}; on the right, an example of $\mathcal{W}_g$ whose curvature diagram is asymptotic to a vertical line.}
\label{figureDCPhypothesis}
\end{figure}

For the reasons above mentioned, we will avoid do homotheties, taking $d = 1$ in the proof of Theorem \ref{main-theorem} and write conditions \eqref{conditioncase1}, \eqref{condition1b}, \eqref{condition2b} and \eqref{condition4} in this case. Thus the choice of $m_0,$ as well the SW-catenoid $G$ for the relation $k_2 = m_0 k_1$ with neck-size $r(\Gamma)$ given in Lemma \ref{Lemasmalldisk}, and the truncated catenoid $G^\ast = G \cap \{0 \leq z \leq h^\ast\}$ will be attached to the following conditions.

\begin{itemize}
\item[(A)] $\dfrac{1}{\lambda} + \dfrac{1}{\beta} < \dfrac{1}{\alpha}$,
\item[(B)] $\alpha < \dfrac{2 h^\ast}{4/\lambda^2 + (h^\ast)^2}$,
\item[(C)] $\alpha < \dfrac{2 h^\ast}{4/s_0^2 + (h^\ast)^2}$,
\item[(D)] $\beta h^\ast < \dfrac{2}{3}$,
\end{itemize}
where $s_0 = s_0(m_0,\Gamma,h^\ast) \in [2\omega(\Gamma),+\infty)$ is the first value for which
\begin{equation}
j(s):= \sqrt{\dfrac{(s-2\omega(\Gamma))^3}{s-\omega(\Gamma)}} > 2\omega(\Gamma) + r_1 - r(\Gamma), \mbox{ for all } s \geq s_0,
\end{equation}
and $r_1 = r_1(m_0,\Gamma,h^\ast)$ is the radius of the horizontal circle $C_1:= G \cap \{z=h^\ast\}$, which can be computed through equation \eqref{heightparametrization}. Moreover, we need a further condition on $g$ that allows to compare $\Sigma\in{\mathcal W}_g$ with the SW-catenoid $G$ by the tangency principle. Such condition is:
\begin{itemize}
\item[(E)] $g(t) > m_0 t$, for all $t \in [\alpha, 1/r(\Gamma)]$;
\end{itemize}

Notice that, all the conditions (A)-(E)  concern the umbilicity constant $\alpha$ and the radius $\beta$  of the cylinder in ${\mathcal W}_g$. 
Moreover, conditions (A)-(E) are sufficient to guarantee that one can do the construction in the proof of Theorem \ref{main-theorem}. In other words, let $\Gamma \subset P:= \{z=0\}$ be a strictly convex planar curve with maximum and minimum curvature values $\Lambda > \lambda > 0$, and denote by $\omega(\Gamma)$ the radius of the smallest closed disk in $P$ containing $\Gamma$ and $r(\Gamma)$ the number given in Lemma \ref{Lemasmalldisk}. If one can find a number $m_0 < 0$ and $h^\ast < r(\Gamma)\mathfrak{h}(m_0)$ (see \eqref{eqbound}), then  for any  given   $g:[\alpha,b) \rightarrow \mathbb{R}$ such that the constant of umbilicity $\alpha$ and the curvature of its cylinders $\beta$ satisfy conditions (A)-(E),  any compact surface $\Sigma \in \mathcal{W}_g$ embedded in $\mathbb{R}^3_+$ whose boundary spans $\Gamma$ is topologically a disk.

Although the  conditions (A)-(E) represent  the most general  framework  based on the proof of Theorem \ref{main-theorem}, they are not easy to apply in practice.  Particular choices of $m_0$ and $h^\ast$ leads to the following, simpler to state but a bit more restrictive result for SW-surfaces of CMC-type.

\begin{theorem} \label{theorem2}
Let $\Gamma \subset P:= \{z = 0\}$ be a strictly convex, closed planar curve with maximum and minimum curvature values $\Lambda > \lambda > 0$ and let $\epsilon \in (0,2/3)$. Let $g:[\alpha,b) \rightarrow \mathbb{R}$, $b \in [\alpha, +\infty]$, be a $C^1$ function with $g(\alpha) = \alpha > 0$, $g(\beta) = 0$ for some $\beta \in (\alpha,b)$ and $g'(t) < 0$ for all $t \in [\alpha,b)$. Assume  that $\alpha$ and $\beta$ satisfy the following conditions
\begin{itemize}
\item[(1)] $\sqrt{\dfrac{\Lambda^3}{\lambda}} < \beta < \dfrac{2}{3\epsilon} \sqrt{\dfrac{\Lambda^3}{\lambda}};$
\item[(2)] $\alpha \leq C(\epsilon,\Lambda,\lambda):= \min \Biggl\{\dfrac{\Lambda\lambda}{\Lambda+\lambda}$, $\dfrac{8\epsilon \sqrt{\lambda^5\Lambda^3}}{\left( 9 \sqrt{\Lambda^3} + 2 \sqrt{\lambda^3}(\cosh\epsilon - 1)\right)^2 + 4\epsilon^2 \lambda^3}\Biggl\}$.
\end{itemize}
If $\Sigma$ is any compact surface embedded in $\mathbb{R}^3_+ = \{(x,y,z) \in \mathbb{R}^3; z \geq 0\}$ that belongs to $\mathcal{W}_g$ and whose boundary spans $\Gamma$, then $\Sigma$ is topologically a closed disk. Moreover, either $\Sigma$ is a vertical graph over the closed domain $\Omega$ bounded by $\Gamma$ in $P$, or $\Sigma \cap(\Omega \times [0,+\infty))$ is a graph over $\Omega$ and $\Sigma - (\Omega \times [0,+\infty))$ is a graph with respect to the lines normal to $\Gamma \times [0,+\infty)$.
\end{theorem}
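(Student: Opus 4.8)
The plan is to reduce Theorem \ref{theorem2} to the construction already carried out in the proof of Theorem \ref{main-theorem}, specialized to the non-homothetic case $d=1$. As explained in the discussion preceding the statement, that construction goes through as soon as the five conditions (A)--(E) hold for some choice of $m_0<0$ and of the truncation height $h^\ast$. Thus my whole task is to exhibit such a choice and to show that hypotheses (1)--(2) force (A)--(E); once this is done, the Case~1 ($h_\Sigma<2/\beta$) / Case~2 ($h_\Sigma\ge 2/\beta$) dichotomy and the concluding steps of Theorem \ref{main-theorem} apply verbatim and yield both the disk topology and the graph decomposition. Since the statement assumes $\Lambda>\lambda$, the boundary $\Gamma$ is not a circle, so Remark \ref{remarkcircle} is not needed.

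The choice I would make is $m_0=-1$ and $h^\ast=\epsilon\sqrt{\lambda/\Lambda^3}$. The value $m_0=-1$ is natural: the associated SW-catenoid $G$ is then the classical minimal catenoid, whose height is unbounded, so $\mathfrak{h}(-1)=+\infty$ and the admissibility requirement $h^\ast<r(\Gamma)\,\mathfrak{h}(m_0)$ is automatic; moreover, writing $r_0=r(\Gamma)$ for the neck radius, the section of $G$ at height $h^\ast$ has radius $r_1=r_0\cosh(h^\ast/r_0)$, which is where the hyperbolic cosine in (2) originates. Throughout I would use the two elementary bounds $\sqrt{\lambda/\Lambda^3}\le r(\Gamma)\le 1/\Lambda$ from Lemma \ref{Lemasmalldisk} and $1/\Lambda\le\omega(\Gamma)\le 1/\lambda$.

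Conditions (A), (D) and (E) follow quickly. The lower bound in (1) gives $\beta>\sqrt{\Lambda^3/\lambda}\ge 1/r(\Gamma)$; hence on $[\alpha,1/r(\Gamma)]$ every $t$ satisfies $t<\beta$, so $g(t)>g(\beta)=0>-t=m_0t$, which is (E). The same inequality yields $\beta>\Lambda$, so $\tfrac{\lambda\beta}{\lambda+\beta}>\tfrac{\lambda\Lambda}{\lambda+\Lambda}\ge\alpha$ by the first term in (2); this is exactly $1/\lambda+1/\beta<1/\alpha$, i.e.\ (A). For (D), the upper bound in (1) gives
\begin{equation*}
\beta h^\ast<\frac{2}{3\epsilon}\sqrt{\frac{\Lambda^3}{\lambda}}\cdot\epsilon\sqrt{\frac{\lambda}{\Lambda^3}}=\frac{2}{3}.
\end{equation*}

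The core of the proof is (B) and (C), which in the normalization $d=1$ amount to $r_d>2/\lambda$ (so that the lima\c con has two loops) and to the $d=1$ form of \eqref{condition2b}, namely $r_d\ge s_0$, where $r_d=\sqrt{2h^\ast/\alpha-(h^\ast)^2}$; both reduce to an explicit upper bound on the lima\c con threshold $s_0$. First I would bound $r_1-r_0=r_0\bigl(\cosh(h^\ast/r_0)-1\bigr)$: since $h^\ast/r_0\le\epsilon$ and $\rho\mapsto\rho\bigl(\cosh(h^\ast/\rho)-1\bigr)$ is decreasing, its maximum over $r_0\ge\sqrt{\lambda/\Lambda^3}$ is attained at the endpoint, giving $r_1-r_0\le\sqrt{\lambda/\Lambda^3}\,(\cosh\epsilon-1)$. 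Feeding this together with $\omega(\Gamma)\le 1/\lambda$ into the defining relation $j(s_0)=2\omega(\Gamma)+(r_1-r_0)$, with $j(s)=\sqrt{(s-2\omega)^3/(s-\omega)}$ increasing, and inverting $j$, I expect the clean bound
\begin{equation*}
s_0\le S:=\frac{9}{2\lambda}+\frac{\sqrt{\lambda}\,(\cosh\epsilon-1)}{\Lambda^{3/2}}.
\end{equation*}
Substituting $S$ into $r_d\ge s_0$, i.e.\ $\alpha\le 2h^\ast/(S^2+(h^\ast)^2)$, and simplifying with $h^\ast=\epsilon\sqrt{\lambda/\Lambda^3}$ reproduces exactly the second term of the minimum in (2); and since $S>2/\lambda$, the same bound yields $r_d>2/\lambda\ge 2\omega(\Gamma)$, hence (B), while also making $r_d$ well defined. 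The main obstacle is precisely this inversion of $j$: because $j$ carries its own dependence on $\omega(\Gamma)$ in addition to its argument, the estimate $s_0\le S$ must be made uniform over all admissible $\Gamma$, and one checks that the extremal configuration is $\omega(\Gamma)=1/\lambda$ (a crude simplification such as $s-\omega\le s$ is too lossy and fails); the numerical constant $9/2$ is exactly what this worst case produces. With (A)--(E) in hand, invoking the proof of Theorem \ref{main-theorem} with $d=1$ finishes the argument.
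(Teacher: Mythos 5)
Your proposal is correct and follows essentially the same route as the paper: reduce to conditions (A)--(E) with $m_0=-1$ and $h^\ast=\epsilon\sqrt{\lambda/\Lambda^3}$, bound the lima\c con threshold by $s_0\leq \frac{9}{2\lambda}+\sqrt{\lambda/\Lambda^3}(\cosh\epsilon-1)$ via $j(s)>s-\tfrac52\omega(\Gamma)$ and $\omega(\Gamma)\leq 1/\lambda$, and check that (1)--(2) deliver exactly these bounds. The only cosmetic difference is that the paper normalizes the neck radius to $r(\Gamma):=\sqrt{\lambda/\Lambda^3}$ outright, whereas you keep $r_0$ general and control $r_1-r_0$ by the monotonicity of $\rho\mapsto\rho(\cosh(h^\ast/\rho)-1)$; both give the same constant in condition (2).
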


\begin{example}
Let us consider $\Gamma$ the ellipse in the plane $\{z=0\}$ given by $x^2 + y^2/4 = 1$. Then, for $\epsilon = 1/2$, $\alpha = 1/25$ and $\beta = 1/10$,  the hypothesis of Theorem \ref{theorem2} are satisfied. If $g:[1/25,1/5) \rightarrow \mathbb{R}$ is given by
\begin{equation*}
g(t) = \dfrac{40t-4}{375t-75},
\end{equation*}
any compact surface $\Sigma$ of the corresponding set $\mathcal{W}_g$ embedded in $\mathbb{R}^3_+$ whose boundary is the ellipse $\Gamma$ must be topologically a disk. Notice also that $g$ does not satisfy condition \eqref{assumption1}, and the principal curvatures diagram of $\mathcal{W}_g$ is asymptotic to the vertical line $x = 1/5$.
\end{example}

\begin{remark}
Theorem \ref{theorem2} can be applied in the case of CMC surfaces with mean curvature satisfying a pinching condition, according to assumptions (1) and (2) with $\alpha = H$ and $\beta = 2H$.
\end{remark}

\begin{proof}[Proof of Theorem \ref{theorem2}]
A simpler set of conditions than (A)-(E) can be obtained by replacing condition E with
\begin{itemize}
\item[(E$^\prime$)] $\beta > \dfrac{1}{r(\Gamma)}$.
\end{itemize}
which is more restrictive, but permit us to choose any value $m_0 < 0$. More precisely, the principal curvature diagram of $\mathcal{W}_g$ lies above that of the SW-catenoid $G$ for the relation $k_2 = m_0 k_1$ of neck-size $r(\Gamma)$, whatever the choice of $m_0 < 0$.

Putting together conditions (D) and (E$^\prime$) we get
\begin{itemize}
\item[(F)] $\dfrac{3h^\ast}{2} < \dfrac{1}{\beta} < r(\Gamma)$.
\end{itemize}
and by taking $s_0 > \frac{2}{\lambda}$ we can merge conditions (B) and (C). Then we are left with conditions (A), (B) and (F).

As a consequence of condition (F), we can also replace condition (A) with
\begin{itemize}
\item[A$^\prime$.] $\dfrac{1}{\alpha} - \dfrac{1}{\lambda} > \dfrac{1}{\Lambda}$.
\end{itemize}
Indeed, if (A$^\prime$), (B) and (F) hold, then by Lemma \ref{Lemasmalldisk}
\begin{equation*}
\dfrac{1}{\lambda} + \dfrac{1}{\beta} < \dfrac{1}{\lambda} + r(\Gamma) < \dfrac{1}{\lambda} + \dfrac{1}{\Lambda} < \dfrac{1}{\alpha}.
\end{equation*}
Then we merge conditions (A$^\prime$) and (B) to get
\begin{itemize}
\item[(G)] $\dfrac{1}{\alpha} > \max\{\dfrac{1}{\lambda} + \dfrac{1}{\Lambda}, \dfrac{s_0^2 + (h^\ast)^2}{2 h^\ast}\}$.
\end{itemize}
We are left with conditions (F) and (G), but it remains to choose $m_0 < 0$ and $h^\ast$. We make the following particular but natural choices. First we define $r(\Gamma):= \sqrt{\frac{\lambda}{\Lambda^3}}$, which is the minimum value allowed by Lemma \ref{Lemasmalldisk}. Then we take $m_0 = -1$ (which corresponds to classic minimal surfaces) and $h^\ast = \epsilon r(\Gamma)$, for some $0< \epsilon < \frac{2}{3}$. Notice that the compatibility condition $h^\ast < r(\Gamma)\mathfrak{h}(-1)$ is trivially satisfied since $\mathfrak{h}(-1) = +\infty$. Then we have that $r_1 = r(\Gamma) \cosh \epsilon$. To compute $s_0$, we notice that
\begin{equation*}
j(s) = \sqrt{\dfrac{(s-2\omega(\Gamma))^3}{s-\omega(\Gamma)}} > s - \frac{5}{2}\omega(\Gamma), \quad \forall s > 2\omega(\Gamma).
\end{equation*}
If we make the choice
\begin{equation*}
s_0(\Gamma):= \dfrac{9}{2 \lambda} + r(\Gamma)(\cosh \epsilon - 1)
\end{equation*}
we have that $s_0 > 2/\lambda$ and for all $s \geq s_0$,
\begin{align*}
j(s) > s - \frac{5}{2}\omega(\Gamma) &> \dfrac{9}{2 \lambda} + r(\Gamma)(\cosh \epsilon - 1) - \frac{5}{2}\omega(\Gamma) \\
&> \dfrac{9}{2}\omega(\Gamma) + r(\Gamma)(\cosh \epsilon - 1) - \frac{5}{2}\omega(\Gamma) \\
&= 2 \omega(\Gamma) + r(\Gamma)(\cosh \epsilon - 1) \\
&= 2 \omega(\Gamma) + r_1 - r(\Gamma).
\end{align*}
From these choices of $r(\Gamma)$, $m_0$, $h^\ast$ and $s_0$, it is straightforward to check that conditions (F) and (G) translates into conditions (1) and (2) in the statement.
\end{proof}

{\bf  Acknowledgements}. The authors would like to thank Jos\'e Antonio G\'alvez for stimulating our interest in exploring Ros-Rosenberg theorem in the context of Weingarten surfaces, Isabel Fern\'andez for insightful suggestions and the anonymous referee for reading carefully the article and its valuable remarks. The authors were partially supported by PRIN-2022AP8HZ9, PRIN-20225J97H5 and INdAM-GNSAGA.

\end{document}